\title{Fine Selmer groups of congruent $p$-adic Galois representations}
\author{Sören Kleine} 
\address{Institut für Theoretische Informatik, Mathematik und Operations Research, Universität der Bundeswehr München, Werner-Heisenberg-Weg 39, 85577 Neubiberg, Germany} 
\email{soeren.kleine@unibw.de} 
\author{Katharina Müller} 
\address{Mathematisches Institut, Georg-August-Universität Göttingen, Bunsenstraße 3-5, 
37073 Göttingen, Germany} 
\email{katharina.mueller@mathematik.uni-goettingen.de} 
\keywords{admissible $p$-adic Lie extension, abelian variety, $p$-adic Galois representation, fine Selmer group, Iwasawa invariants}
\subjclass[2010]{11R23}
 	\newcommand{\Z}{\mathds{Z}}
 	\newcommand{\N}{\mathds{N}}
 	\newcommand{\Q}{\mathds{Q}}
 	\newcommand{\F}{\mathds{F}}
 	\newcommand{\q}{\mathfrak{q}}
 	\newcommand{\Gal}{\textup{Gal}}
 	\newcommand{\rg}{\textup{rank}}
 	\newcommand{\Ok}{\mathcal{O}}
 	\newcommand{\coker}{\mathrm{coker}}
 	\newcommand{\Sel}{\textup{Sel}} 
 	\newtheorem{lemma}{Lemma}[section]
 	\newtheorem{thm}[lemma]{Theorem} 
 	\newtheorem*{thm*}{Theorem} 
 	\newtheorem{cor}[lemma]{Corollary}
 	\theoremstyle{definition}
 	\newtheorem{rem}[lemma]{Remark} 
 	\newtheorem{example}[lemma]{Example} 
\begin{document}

\maketitle

\begin{abstract} 
   We compare the Pontryagin duals of fine Selmer groups of two congruent $p$-adic Galois representations  over admissible pro-$p$, $p$-adic Lie extensions $K_\infty$ of number fields $K$. We prove that in several natural settings the $\pi$-primary submodules of the Pontryagin duals are pseudo-isomorphic over the Iwasawa algebra; if the coranks of the fine Selmer groups are not equal, then we can still prove inequalities between the $\mu$-invariants. In the special case of a $\Z_p$-extension $K_\infty/K$, we also compare the Iwasawa $\lambda$-invariants of the fine Selmer groups, even in situations where the $\mu$-invariants are non-zero. Finally, we prove similar results for certain abelian non-$p$-extensions. 
\end{abstract} 

\section{Introduction} 
Let $p$ be a prime, and let $F$ be a finite extension of $\Q_p$, with ring of integers $\Ok$ and uniformising element $\pi$. Suppose that $V_1$ and $V_2$ denote two $F$-representations of the absolute Galois group of a fixed number field $K$, and that $T_1 \subseteq V_1$ and $T_2 \subseteq V_2$ are two Galois invariant sublattices. We let $A_1 = V_1/T_1$ and $A_2 = V_2/T_2$ and we assume that $A_1[\pi^l]$ and $A_2[\pi^l]$ 
are isomorphic as Galois modules for some $l \in \N$. In this article, we study the Pontryagin duals of the fine Selmer groups of $A_1$ and $A_2$ over (strongly) admissible $p$-adic Lie extensions, and we compare their ranks and Iwasawa invariants. 

By an \emph{admissible $p$-adic Lie extension} we mean a normal extension $K_\infty/K$ such that only finitely many primes of $K$ ramify in $K_\infty$ and such that ${G = \Gal(K_\infty/K)}$ is a compact, pro-$p$, $p$-adic Lie group without $p$-torsion. For any finite set $\Sigma$ of primes of $K$, an admissible $p$-adic Lie extension $K_\infty/K$ shall be called \emph{strongly $\Sigma$-admissible} if $K_\infty$ contains a $\Z_p$-extension $L$ of $K$ such that no prime $v \in \Sigma$ and no prime of $K$ which ramifies in $K_\infty$ is completely split in $L$ (see also Section~\ref{section:notation}; in the literature usually only the case of the cyclotomic $\Z_p$-extension $L = K_\infty^c$ of $K$ is considered, see for example \cite{lim}). 

The comparison of \emph{Selmer groups} of congruent $p$-adic representations goes back to the seminal work of Greenberg and Vatsal (see \cite{greenberg-vatsal}), who considered elliptic curves defined over $\Q$ with good and ordinary reduction at some odd prime $p$ (in fact the Selmer groups were studied more generally in the context of Galois representations). The main issue dealt with in the article \cite{greenberg-vatsal} is the relation between algebraically and analytically (i.e.~via $p$-adic $L$-functions) defined Iwasawa invariants. Roughly speaking, Greenberg and Vatsal treated the $\mu = 0$ case and only considered the cyclotomic $\Z_p$-extension. 

Over the last years, the results in \cite{greenberg-vatsal} have been generalised in many different ways and we only mention a few exemplary results. For the comparison of analytical invariants of congruent elliptic curves defined over $\Q$, we refer to \cite{hatley}; in the present article we stick to the algebraic side. Most authors have focussed on the $\mu = 0$ setting from \cite{greenberg-vatsal}: if $\mu = 0$ for the Selmer group of $A_1$, then the same holds true for the Selmer group of $A_2$. Moreover, over $\Z_p$-extensions one can then often prove equality of $\lambda$-invariants (we refer to Section~\ref{section:notation} for the definition of the Iwasawa invariants). Analogous results have been obtained for Selmer groups of Galois representations over the anticyclotomic $\Z_p$-extension of an imaginary quadratic base field $K$ (see \cite{hatley-lei}) and for \emph{signed Selmer groups} of Galois representations over the cyclotomic $\Z_p$-extension of a number field in the non-ordinary setting (see for example \cite[Section~3]{ponsinet}). Moreover, there exist vast generalisations to Selmer groups attached to families of modular forms (see for example \cite{emerton-pollack-weston}, \cite{sharma} and \cite{barth}). 

Situations where $\mu \ne 0$ have been studied for example in \cite{ahmed-shekhar} and \cite{barman-saikia}. In these articles the authors considered congruent elliptic curves $E_1$ and $E_2$ over $\Q$ at primes $p > 2$ of good ordinary reduction. Under the additional assumption that $E_j(\Q)[p^\infty] = \{0\}$ for $j \in \{ 1,2 \}$, the authors deduced the equality of $\lambda$-invariants (see \cite{ahmed-shekhar}), respectively $\mu$-invariants (see \cite{barman-saikia}) from a sufficiently high congruence relation $E_1[p^l] \cong E_2[p^l]$. Much more generally, Lim studied the Selmer groups of Galois representations over admissible $p$-adic Lie extensions in \cite{lim}. In particular, he obtained the following result: if $A_1$ and $A_2$ are attached to two $p$-adic Galois representations and $A_1[\pi^l] \cong A_2[\pi^l]$ for some sufficiently large $l$, then the $\pi$-primary submodules of the Pontryagin duals of the associated Selmer groups are pseudo-isomorphic. This comparison statement is much stronger than the previous results. We are able to prove a similar result for fine Selmer groups (see Theorem~\ref{thm:A} below). Lim also studied \emph{strict Selmer groups}, as introduced by Greenberg in \cite{greenberg89}. These strict Selmer groups of $p$-adic Galois representations have also been studied by Hachimori in \cite{hachimori}. 

In the present article, our main objective is the comparison of \emph{fine Selmer groups} of congruent $p$-adic Galois representations over admissible $p$-adic Lie extensions. These objects have previously been investigated by Lim and Sujatha in \cite{lim_congruent}, who obtained a comparison result in the $\mu = 0$ setting  under a stronger condition on the decomposition of primes in $K_\infty/K$ (see \cite[Theorem~3.7]{lim_congruent}). Moreover, Jha studied in \cite{jha} the invariance of several arithmetic properties of fine Selmer groups of modular forms in a branch of a Hida family in the $\mu = 0$ setting. 

Our first main result is an analogon of the strong result of Lim in \cite{lim} for fine Selmer groups over strongly admissible $p$-adic Lie extensions, which is not restricted to the case $\mu = 0$. For an admissible $p$-adic Lie extension $K_\infty$ of $K$ and an $F$-representation $V$ of the absolute Galois group of $K$, we let $T$ denote a Galois invariant $\Ok$-lattice in $V$ and set $A =V/T$. Let $\Sigma$ be a finite set of primes of $K$ containing all the primes above $p$ and each prime where $V$ is ramified. Then $Y_{A, \Sigma}^{(K_\infty)}$ shall denote the Pontryagin dual of the  $\Sigma$-fine Selmer group of $A$ over $K_\infty$ (see Section~\ref{subsection:selmergroups} for the precise definition). 
\begin{thm} \label{thm:A} 
   Let $A_1$ and $A_2$ be associated with two $F$-representations $V_1$ and $V_2$ of the absolute Galois group of the number field $K$. If $p = 2$, then we assume that $K$ is totally imaginary. Let $\Sigma$ be a finite set of primes of $K$ which contains the primes above $p$ and the sets of primes of $K$ where either $V_1$ or $V_2$ is ramified. Let $K_\infty/K$ be a strongly $\Sigma$-admissible $p$-adic Lie extension, and let $G = \Gal(K_\infty/K)$. 
   
   We let $r_j = \rg_{\Ok[[G]]}(Y_{A_j, \Sigma}^{(K_\infty)})$, $1 \le j \le 2$. Let $l$ be the minimal integer such that $(\pi^l Y_{A_1, \Sigma}^{(K_\infty)})[\pi]$ is pseudo-null over $\Ok[[G]]$. 
   Then the following statements hold. \begin{compactenum}[(a)]  
     \item If $A_1[\pi^l]\cong A_2[\pi^l]$ as $G_{K}$-modules and $r_2 \le r_1$, then $\mu(Y_{A_1, \Sigma}^{(K_\infty)})\le \mu(Y_{A_2, \Sigma}^{(K_\infty)})$.  
     \item If $A_1[\pi^{l+1}]\cong A_2[\pi^{l+1}]$, then $r_2 \le r_1$. 
     If moreover $r_2 = r_1$, then $$\mu(Y_{A_1, \Sigma}^{(K_\infty)}) = \mu(Y_{A_2, \Sigma}^{(K_\infty)})$$ and the modules $Y_{A_1, \Sigma}^{(K_\infty)}[\pi^{\infty}]$ and $Y_{A_2, \Sigma}^{(K_\infty)}[\pi^{\infty}]$ are pseudo-isomorphic. 
     \item In particular, if $A_1[\pi]\cong A_2[\pi]$ and $r_2 = r_1$, then ${\mu(Y_{A_1, \Sigma}^{(K_\infty)})=0}$ holds if and only if $\mu(Y_{A_2, \Sigma}^{(K_\infty)})=0$. 
     \item If $A_1[\pi^{l+1}] \cong A_2[\pi^{l+1}]$ for some integer $l$ such that both $(\pi^l Y_{A_j, \Sigma}^{(K_\infty)})[\pi]$, ${1 \le j \le 2}$, are pseudo-null, then $r_2 = r_1$ and ${\mu(Y_{A_1, \Sigma}^{(K_\infty)}) = \mu(Y_{A_2, \Sigma}^{(K_\infty)})}$. 
     \end{compactenum} 
\end{thm}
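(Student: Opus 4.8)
The plan is to derive (d) from part (b) by exploiting the symmetry of the hypotheses in $A_1$ and $A_2$. First I would record the elementary fact that the property ``$(\pi^l M)[\pi]$ is pseudo-null over $\Ok[[G]]$'' persists when $l$ is enlarged: indeed $\pi^{l'}M \subseteq \pi^l M$ for $l' \ge l$, so $(\pi^{l'}M)[\pi] \subseteq (\pi^l M)[\pi]$, and a submodule of a pseudo-null module is pseudo-null. Hence the set of $l$ for which $(\pi^l M)[\pi]$ is pseudo-null is an interval $[l_0,\infty)$. Writing $l_j$ for the least non-negative integer with $(\pi^{l_j}Y_{A_j,\Sigma}^{(K_\infty)})[\pi]$ pseudo-null (so $l_1$ is exactly the integer with respect to which part (b) is formulated), the hypothesis of (d) provides an integer $l$ for which $(\pi^l Y_{A_j,\Sigma}^{(K_\infty)})[\pi]$ is pseudo-null for both $j$, and minimality then forces $l_1 \le l$ and $l_2 \le l$.

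Next I would use that a $G_K$-isomorphism $A_1[\pi^{l+1}] \cong A_2[\pi^{l+1}]$ automatically restricts, for every $m \le l+1$, to a $G_K$-isomorphism $A_1[\pi^m] \cong A_2[\pi^m]$, because $A_j[\pi^m]$ is intrinsically the $\pi^m$-torsion submodule of $A_j[\pi^{l+1}]$. Taking $m = l_1+1$ yields $A_1[\pi^{l_1+1}] \cong A_2[\pi^{l_1+1}]$, which is precisely the hypothesis of part (b); therefore $r_2 \le r_1$. Since $\Sigma$, the strong $\Sigma$-admissibility of $K_\infty/K$ and the group $G$ treat $A_1$ and $A_2$ on an equal footing, the statement of part (b) holds equally after interchanging the two representations; applying that version to the congruence $A_2[\pi^{l_2+1}] \cong A_1[\pi^{l_2+1}]$ (obtained by taking $m = l_2+1$) gives $r_1 \le r_2$. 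Hence $r_1 = r_2$, and the second clause of part (b) now yields $\mu(Y_{A_1,\Sigma}^{(K_\infty)}) = \mu(Y_{A_2,\Sigma}^{(K_\infty)})$, in fact also the pseudo-isomorphism of the two $\pi$-primary submodules, which is more than (d) claims.

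The genuine difficulty therefore lies not in (d) but entirely inside part (b), and that is where I expect the main obstacle to be. The crux there is a comparison, in the style of Greenberg--Vatsal and of Lim's work on Selmer groups, between $Y_{A_j,\Sigma}^{(K_\infty)}$ and the Pontryagin dual of the $\Sigma$-fine Selmer group of the finite module $A_j[\pi^{l_1+1}]$: one feeds the tautological exact sequence $0 \to A_j[\pi^{l_1+1}] \to A_j \xrightarrow{\pi^{l_1+1}} A_j \to 0$ into the diagram defining the fine Selmer group and runs the snake lemma. The resulting discrepancy is governed by the $H^0$ of $A_j$ over the relevant global Galois group, which is a quotient of a cofinitely generated $\Ok$-module and hence finite, hence pseudo-null over $\Ok[[G]]$, and by the corresponding local $H^0$'s at the primes $w \mid \Sigma$ of $K_\infty$; the latter must be handled through their induced-module structure over $\Ok[[G]]$, and it is exactly here that strong $\Sigma$-admissibility is needed, to exclude primes that split completely in a $\Z_p$-subextension, which would otherwise make these local contributions too large to absorb. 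Transporting this information across the isomorphism $A_1[\pi^{l_1+1}] \cong A_2[\pi^{l_1+1}]$ and reading it off modulo $\pi^{l_1+1}$ — where, by minimality of $l_1$, both the $\Ok[[G]]$-rank and the $\mu$-invariant remain visible — produces the inequality $r_2 \le r_1$ and, when the ranks agree, the equality of $\mu$-invariants. Granting part (b) in this form, the deduction of (d) above is purely formal.
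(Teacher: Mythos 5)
Your deduction of part (d) from part (b) is correct and is, in essence, the paper's own one-line argument (``exchange the roles of $A_1$ and $A_2$''). Your two preliminary observations --- that pseudo-nullity of $(\pi^l M)[\pi]$ persists when $l$ is enlarged, so the set of admissible $l$ is a terminal interval $[l_0,\infty)$, and that a $G_K$-isomorphism $A_1[\pi^{l+1}]\cong A_2[\pi^{l+1}]$ restricts to isomorphisms on $\pi^m$-torsion for every $m\le l+1$ --- make the symmetry argument fully rigorous: they let you pass from the $l$ of hypothesis (d) to the two minimal values $l_1,l_2$ needed to invoke (b) in each direction, yielding $r_1=r_2$ and then equality of $\mu$-invariants (indeed pseudo-isomorphism of the $\pi$-primary parts, as you note).

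The genuine gap is that the statement consists of parts (a)--(d), and you have proved only (d), and only conditionally on (b); parts (a) and (c) receive no treatment, and (b) is only sketched. The sketch does point the right way --- compare $\Sel_{0,A,\Sigma}(L)[\pi^k]$ with $\Sel_{0,A[\pi^k],\Sigma}(L)$ via $0\to A[\pi^k]\to A\xrightarrow{\pi^k}A\to 0$, control the error by global and local $H^0$-terms, and use strong $\Sigma$-admissibility to tame the local contributions --- but this is precisely where the work lies, and it is not carried out. Concretely, you still need: (i) the translation of both $\mu(Y)$ and $\rg_{\Ok[[G]]}(Y)$ into $\F_q[[G]]$-ranks of the graded pieces $\pi^iY/\pi^{i+1}Y$ (the paper's Lemma~\ref{lemma:mu} together with the formula \eqref{eq:mu} and \cite[Prop.~4.12]{lim_fuer_howson}); (ii) the inverse-limit comparison $Y_{A,\Sigma}^{(K_\infty)}/\pi^k Y_{A,\Sigma}^{(K_\infty)}\to \varprojlim_L\Sel_{0,A[\pi^k],\Sigma}(L)^\vee$ with kernel and cokernel admitting an annihilator outside $(\pi)$, which is where strong $\Sigma$-admissibility enters through $\Ok[[H]]$-finite generation and the result of \cite{5_people} (Corollary~\ref{cor:vergleich}); and (iii) for the pseudo-isomorphism clause of (b), the structure-theorem argument showing $E_1=E_2$ once the graded $\F_q[[G]]$-ranks agree up to level $l$. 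Without these, the substance of the theorem remains unproved, and (d) is a formal corollary of an unestablished claim.
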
 

This result will be proved in Section~\ref{sec:generic}. The basic idea of the proof is to relate the $\pi^k$-torsion subgroups of the fine Selmer groups of $A_j$, $k \in \N$, to certain $\pi^k$-fine Selmer groups (defined in Section~\ref{section:notation}) which depend only on $A_j[\pi^k]$. In the case of admissible $p$-adic Lie extensions $K_\infty/K$ which are not strongly admissible, we can derive similar results under the hypothesis that $A_j(K_v)[\pi]=\{0\}$ for every $v \in \Sigma$ and $j \in \{1,2\}$ (see Theorem~\ref{thm:A2} below). In order to obtain this result, we use an argument which goes back to the paper of Greenberg and Vatsal (see \cite[Proposition~2.8]{greenberg-vatsal}), appears also in work of Mazur and Rubin (see \cite[Lemma~3.5.3]{kolyvagin}) and has been used in, e.g.  \cite{barman-saikia} and \cite{ponsinet}. This approach is of particular interest if one wants to treat $\Z_p$-extensions $K_\infty$ of $K$ in which some prime above $p$ or a ramified prime is completely split. In the special case of $\Z_p$-extensions, and under the additional hypotheses on the $\pi$-torsion which have been mentioned above, we can in fact go one step further and obtain results on the $\lambda$-invariants, provided that the $\Ok[[G]]$-modules $Y_{A_i, \Sigma}^{(K_\infty)}$ both are torsion: 
\begin{thm}  \label{thm:B} 
  In the setting of Theorem~\ref{thm:A2}, suppose that $G \cong \Z_p$ and that both ranks $r_1$ and $r_2$ are zero. Then, in addition to the assertions from Theorem~\ref{thm:A2}, the following two statements hold: \begin{compactenum}[(a)] 
     \item If $l \in \N$ is large enough such that $(\pi^l Y_{A_1, \Sigma}^{(K_\infty)})[\pi] = \{0\}$ and ${A_1[\pi^{l+1}] \cong A_2[\pi^{l+1}]}$, then ${\lambda(Y_{A_2, \Sigma}^{(K_\infty)}) \le \lambda(Y_{A_1, \Sigma}^{(K_\infty)})}$. 
     \item If $A_1[\pi^{l+1}] \cong A_2[\pi^{l+1}]$ for some $l$ such that both $(\pi^l Y_{A_j, \Sigma}^{(K_\infty)})[\pi] = \{0\}$, ${1 \le j \le 2}$, then ${\lambda(Y_{A_1, \Sigma}^{(K_\infty)}) = \lambda(Y_{A_2, \Sigma}^{(K_\infty)})}$. 
  \end{compactenum} 
\end{thm}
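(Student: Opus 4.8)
The plan is to translate everything into a statement about finitely generated torsion modules over the (commutative) Iwasawa algebra $\Lambda := \Ok[[G]] \cong \Ok[[T]]$, using the description of the $\pi$-power quotients of the duals of the fine Selmer groups through the $\pi^k$-fine Selmer groups that underlies the proof of Theorem~\ref{thm:A2}. Write $Y_j := Y_{A_j,\Sigma}^{(K_\infty)}$; since $r_j = 0$, each $Y_j$ is a finitely generated torsion $\Lambda$-module. First I would record that in the setting of Theorem~\ref{thm:A2} (where $A_j(K_v)[\pi] = \{0\}$ for all $v \in \Sigma$) one in fact has $A_j(K_{\infty,w}) = \{0\}$ for every prime $w$ of $K_\infty$ above some $v \in \Sigma$, and likewise $A_j(K_\infty) = \{0\}$: the extension $K_{\infty,w}/K_v$ is pro-$p$ (its Galois group is a closed subgroup of $G$), a nontrivial pro-$p$ group acting on the nonzero finite $p$-group $A_j[\pi]$ has nonzero fixed points, so $A_j(K_v)[\pi] = \{0\}$ forces $A_j(K_{\infty,w})[\pi] = \{0\}$; and $A_j(K_{\infty,w})$ is $\pi$-power torsion, so the vanishing of its $\pi$-torsion gives $A_j(K_{\infty,w}) = \{0\}$. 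Feeding this into the cohomology sequences of $0 \to A_j[\pi^k] \to A_j \xrightarrow{\pi^k} A_j \to 0$ (globally and locally) makes the natural maps between the $\pi^k$-fine Selmer group of $A_j[\pi^k]$ and the $\pi^k$-torsion of the fine Selmer group of $A_j$ into isomorphisms; dualizing, $Y_j/\pi^k Y_j$ is isomorphic to the Pontryagin dual of the $\pi^k$-fine Selmer group attached to $A_j[\pi^k]$, an object depending only on the Galois module $A_j[\pi^k]$. Hence the congruence $A_1[\pi^{l+1}] \cong A_2[\pi^{l+1}]$ produces an isomorphism of $\Lambda$-modules $\phi \colon Y_1/\pi^{l+1}Y_1 \xrightarrow{\sim} Y_2/\pi^{l+1}Y_2$. (If the proof of Theorem~\ref{thm:A2} does not state these isomorphisms in exactly this form, one extracts them from the Greenberg--Vatsal/Mazur--Rubin argument used there.)

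Next I would analyse the structure of the $Y_j$. Let $Y_j[\pi^\infty]$ be the $\pi$-primary submodule, $e_j$ its exponent, and $N_j := Y_j/Y_j[\pi^\infty]$. Then $N_j$ has no $\pi$-torsion, hence no nonzero finite $\Lambda$-submodule, hence embeds into its elementary form with finite cokernel; this forces $\mu(N_j) = 0$ and shows $N_j$ is a free $\Ok$-module of rank $\rg_{\Ok}(N_j) = \lambda(N_j) = \lambda(Y_j) =: \lambda_j$ (the last equality by multiplicativity of characteristic ideals in $0 \to Y_j[\pi^\infty] \to Y_j \to N_j \to 0$). As $N_j$ is $\Ok$-flat, tensoring this sequence with $\Ok/\pi^k$ gives a short exact sequence $0 \to Y_j[\pi^\infty]/\pi^k \to Y_j/\pi^k Y_j \xrightarrow{q_j} (\Ok/\pi^k)^{\lambda_j} \to 0$, in which the left-hand term equals $Y_j[\pi^\infty]$ as soon as $k \ge e_j$. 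I would also note that the hypothesis $(\pi^l Y_1)[\pi] = \{0\}$ is equivalent to $e_1 \le l$: indeed $(\pi^l Y_1)[\pi] = \pi^l\,Y_1[\pi^{l+1}]$, so the condition says $Y_1[\pi^{l+1}] = Y_1[\pi^l]$, and since $\mu$ is a pseudo-isomorphism invariant this forces every elementary $\pi$-exponent of $Y_1$ to be $\le l$; similarly for $Y_2$ in part~(b).

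Now for the key computation, set $\delta_j := \dim_{\F}\pi^l(Y_j/\pi^{l+1}Y_j)$; note $\pi^l(Y_j/\pi^{l+1}Y_j)$ is killed by $\pi$, hence is an $\F$-vector space. Restricting $q_j$ to this submodule gives a surjection onto $\pi^l(\Ok/\pi^{l+1})^{\lambda_j} \cong \F^{\lambda_j}$, so $\delta_j \ge \lambda_j$ in all cases. When $e_1 \le l$, the kernel of $q_1$ is $Y_1[\pi^\infty]$, which is killed by $\pi^l$, and one checks that $Y_1[\pi^\infty] \cap \pi^l(Y_1/\pi^{l+1}Y_1) = 0$ — concretely, $(Y_1[\pi^\infty] + \pi^{l+1}Y_1) \cap \pi^l Y_1 = \pi^{l+1}Y_1$, using that $Y_1/Y_1[\pi^\infty]$ is $\Ok$-torsion-free and $\pi^l Y_1[\pi^\infty] = 0$; hence $q_1$ restricts to an isomorphism $\pi^l(Y_1/\pi^{l+1}Y_1) \cong \F^{\lambda_1}$ and $\delta_1 = \lambda_1$. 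Finally $\phi$ restricts to an isomorphism $\pi^l(Y_1/\pi^{l+1}Y_1) \cong \pi^l(Y_2/\pi^{l+1}Y_2)$, so $\delta_1 = \delta_2$. Combining, $\lambda_2 \le \delta_2 = \delta_1 = \lambda_1$, which is assertion~(a). If moreover $e_2 \le l$, the same argument with the roles of the indices exchanged gives $\delta_2 = \lambda_2$ as well, so $\lambda_1 = \delta_1 = \delta_2 = \lambda_2$, which is assertion~(b).

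The only step I expect to require real care is the first one: confirming that, once $A_j(K_v)[\pi] = \{0\}$, the $\pi^k$-fine Selmer groups compute the quotients $Y_j/\pi^k Y_j$ on the nose, with no surviving finite error terms. This is exactly where the local hypothesis of Theorem~\ref{thm:A2} is used, and it is the same input as in that theorem's proof, so everything after it is elementary module theory over $\Lambda$. One should also flag why part~(a) yields only an inequality: without the bound $e_2 \le l$ one cannot exclude that $Y_2[\pi^\infty]/\pi^{l+1}$ meets $\pi^l(Y_2/\pi^{l+1}Y_2)$ nontrivially, so $\delta_2$ may strictly exceed $\lambda_2$ and only the surjection $\pi^l(Y_2/\pi^{l+1}Y_2) \twoheadrightarrow \F^{\lambda_2}$ — not an isomorphism — is available.
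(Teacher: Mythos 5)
Your argument is correct, and the overall strategy is the same as the paper's: both compare $\lvert \pi^l Y_j/\pi^{l+1}Y_j\rvert$ to $q^{\lambda(Y_j)}$, using the isomorphism $Y_1/\pi^{l+1}Y_1\cong Y_2/\pi^{l+1}Y_2$ that the congruence $A_1[\pi^{l+1}]\cong A_2[\pi^{l+1}]$ supplies via Corollary~\ref{cor:isomorphisselmergroups} and Lemma~\ref{lemma:G-iso}. Where you differ is in how the key identity $\dim_{\F_q}\bigl(\pi^l Y_1/\pi^{l+1}Y_1\bigr)=\lambda(Y_1)$ and the accompanying inequality for $Y_2$ are established. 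The paper observes that $(\pi^l Y_1)[\pi]=0$ makes $\pi^l Y_1$ an $\Ok$-free module, takes a pseudo-isomorphism $\varphi\colon\pi^l Y_1\hookrightarrow E$ onto an elementary $\Lambda$-module with finite cokernel, and invokes the argument of \cite[Proposition~3.4(i)]{local_beh} to transfer $\rg_q$ across $\varphi$; you instead pass to the canonical exact sequence $0\to Y_j[\pi^\infty]\to Y_j\to N_j\to 0$, note that $N_j$ is $\Ok$-free of rank $\lambda_j$, tensor with $\Ok/\pi^{l+1}$ using the flatness of $N_j$, and then analyse $\pi^l$-parts directly. Your route is more self-contained (no appeal to an external lemma, no pseudo-isomorphism), and it makes completely explicit where the inequality for $A_2$ can fail to be an equality: when $e_2>l$, the kernel $Y_2[\pi^\infty]/\pi^{l+1}$ can meet $\pi^l(Y_2/\pi^{l+1}Y_2)$ nontrivially. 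The paper's route is shorter on the page but relies on the structure theorem for $\Lambda$-torsion modules and a citation. One small point worth being precise about in a write-up: the isomorphism $\phi$ on the $\pi^{l+1}$-quotients requires that the isomorphisms of Lemma~\ref{lemma:G-iso} be compatible with corestriction before passing to the projective limit; this is indeed what Corollary~\ref{cor:isomorphisselmergroups} packages, but you should state it rather than merely gesture at extracting it.
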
 

We remark that we do not have to assume that the $\mu$-invariants vanish in Theorem~\ref{thm:B}. 

Finally, in Section~\ref{section:q}, we consider certain abelian non-$p$-extensions $K_\infty$ of $K$. In two different settings (inspired by the two different cases treated in Section~\ref{section:fine}), we compare the $\Ok$-ranks of $Y_{A_1, \Sigma}^{(K_\infty)}$ and $Y_{A_2, \Sigma}^{(K_\infty)}$ and derive (in-)equalities analogous to those in Theorem~\ref{thm:B}. We also remark that the group ring $\Ok[[\Gal(K_\infty/K)]]$ is not well-behaved in this situation and the $\Ok$-rank is the natural substitute for the notion of Iwasawa $\lambda$-invariants. 

\textbf{Acknowledgements.} We thank Dominik Bullach and Antonio Lei for their valuable comments on an earlier draft of this article.  

\section{Background and notation} \label{section:notation} 
\subsection{Admissible $p$-adic Lie extensions and Iwasawa modules} \label{subsection:modules} 
We fix once and for all a rational prime $p$. Let $F$ be a finite extension of $\Q_p$. We will denote its ring of integers by $\mathcal{O}$ and a generator of its maximal ideal by $\pi$. Note that $\mathcal{O}/(\pi)$ is a finite field with $q = p^f$ elements, where $f$ is the inertia degree of $p$ in $F/\Q_p$. 
For any Noetherian $\mathcal{O}$-module $G$, we denote by $G[\pi^\infty]$ the subgroup of $\pi$-power torsion elements; for any $i \in \N$, $G[\pi^i]$ shall denote the subgroup of elements which are annihilated by $\pi^i$. 

In this article, an \emph{admissible $p$-adic Lie extension} $K_\infty$ of a number field $K$ will always be a normal extension $K_\infty/K$ such that 
\begin{compactitem}
  \item $G := \Gal(K_\infty/K)$ is a compact pro-$p$ $p$-adic Lie group, 
  \item $G[p^\infty] = \{0\}$, i.e. $G$ does not contain any $p$-torsion elements, and 
  \item the set $S_{\textup{ram}}(K_\infty/K)$ of primes of $K$ ramifying in $K_\infty$ is finite. 
\end{compactitem} 
Let $\Sigma$ be a finite set of finite primes of $K$. The pro-$p$-extension $K_\infty/K$ is called \emph{strongly $\Sigma$-admissible} if it is admissible and moreover contains a  $\Z_p$-extension $L$ of $K$ such that no prime in $\Sigma\cup S_{\textup{ram}}(K_{\infty}/K)$ is completely split in $L$. In this case, we fix $L$ and denote by $H \subseteq G$ the subgroup fixing $L$. Note that any strongly $\Sigma$-admissible $p$-adic Lie extension $K_\infty/K$ is strongly $\Sigma\cup S_{\textup{ram}}(K_{\infty}/K)$-admissible. By abuse of notation we will always assume that $\Sigma$ contains $S_{\textup{ram}}(K_{\infty}/K)$ if $K_{\infty}/K$ is a strongly $\Sigma$-admissible $p$-adic Lie extension. 

An admissible $p$-adic Lie extension $K_\infty/K$ is called \emph{strongly admissible} if it contains the cyclotomic $\Z_p$-extension of $K$. Since no prime of $K$ splits completely in the cyclotomic $\Z_p$-extension, a strongly admissible $p$-adic Lie extension is strongly $\Sigma$-admissible for every finite set $\Sigma$. 

If $K_\infty/K$ is an admissible $p$-adic Lie extension, then the completed group ring ${\mathcal{O}[[G]] = \mathcal{O} \otimes_{\Z_p} \Z_p[[G]]}$ is a Noetherian domain (see \cite[Theorem~2.3]{coates-howson}), and we can define the $\mathcal{O}[[G]]$-rank of a finitely generated $\mathcal{O}[[G]]$-module $X$ by 
\[ \rg_{\mathcal{O}[[G]]}(X) = \dim_{\mathcal{F}(G)}(\mathcal{F}(G) \otimes_{\mathcal{O}[[G]]} X), \] 
where $\mathcal{F}(G)$ denotes the skew field of fractions of $\mathcal{O}[[G]]$ (see \cite[Chapter~10]{goodearl-warfield}). 

Following Howson (see \cite[(33)]{howson}),  we define the $\mu$-invariant of a finitely generated $\mathcal{O}[[G]]$-module $X$ as 
\begin{align} \label{eq:mu} 
\mu(X) = \sum_{i \ge 0} \rg_{\F_q[[G]]}(\pi^i X[\pi^\infty]/\pi^{i+1} X[\pi^\infty]);
\end{align} 
this is a finite sum as $X$ is Noetherian. 
\begin{rem}
  \label{finitesum}
  Let $X$ be a Noetherian $\pi$-primary $\mathcal{O}[[G]]$-module. Then there exists an integer $m$ such that $\pi^mX=\{0\}$. Suppose now that ${\rg_{\F_q[[G]]}( X[\pi])=0}$. Then there exists an annihilator ${f\in \mathcal{O}[[G]]\setminus \pi\mathcal{O}[[G]]}$ of $X[\pi]$. In particular, $$\pi^{m-1}fX=\{0\} \quad \text{ and } \quad \pi^{m-2}fX\subseteq X[\pi]. $$ 
  Thus, we inductively obtain that ${f^mX=\{0\}}$. Therefore  ${\rg_{\F_q[[G]]}(X/\pi X)=0}$.
\end{rem}
\begin{lemma} \label{lemma:mu} 
  Let $G = \Gal(K_\infty/K)$ be as above, and let $X$ be a finitely generated $\mathcal{O}[[G]]$-module of rank $r$. Then 
  $$ \rg_{\F_q[[G]]}(\pi^i X[\pi^\infty]/\pi^{i+1}X[\pi^\infty]) = \rg_{\F_q[[G]]}(\pi^iX/\pi^{i+1}X) - r$$ 
  for each $i \in \N$. 
\end{lemma}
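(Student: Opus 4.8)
The plan is to decompose the finitely generated $\Ok[[G]]$-module $X$ via the structure theory up to pseudo-isomorphism. Since $\Ok[[G]]$ is a Noetherian domain (by \cite[Theorem~2.3]{coates-howson}), a finitely generated module $X$ of rank $r$ admits a pseudo-isomorphism onto $\Ok[[G]]^r \oplus X[\pi^\infty]$, at least after controlling the non-$\pi$-primary torsion; more precisely, the relevant statement is that there is an exact sequence relating $X$ to a direct sum of $\Ok[[G]]^r$, the $\pi$-primary part $X[\pi^\infty]$, and a torsion module with no $\pi$ in its support, with pseudo-null kernel and cokernel. The key observation is that $\rg_{\F_q[[G]]}(\pi^i M/\pi^{i+1}M)$ is insensitive to pseudo-isomorphisms: a pseudo-null $\Ok[[G]]$-module is, in particular, a torsion $\F_q[[G]]$-module once reduced mod $\pi$, so both its $\F_q[[G]]$-rank and that of its $\pi^i/\pi^{i+1}$ quotients vanish. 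Hence I would first reduce the identity to the three building blocks separately.

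First I would handle $X = \Ok[[G]]$: here $\pi^i X/\pi^{i+1}X \cong \Ok[[G]]/\pi\Ok[[G]] = \F_q[[G]]$, which has $\F_q[[G]]$-rank $1$, while $X[\pi^\infty] = \{0\}$, so the formula reads $0 = 1 - 1$. Summing over the $r$ free summands accounts for the $-r$ term. Second, for the non-$\pi$-primary torsion part $Z$ — a torsion $\Ok[[G]]$-module on which some element of $\Ok[[G]]\setminus\pi\Ok[[G]]$ acts, up to pseudo-null error, as a nonzerodivisor annihilator — the module $Z/\pi Z$ is $\F_q[[G]]$-torsion, and likewise each $\pi^i Z/\pi^{i+1}Z$, so both sides of the claimed identity are $0$; here one uses exactly the argument already recorded in Remark~\ref{finitesum}. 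Third, for $X = X[\pi^\infty]$ itself, the formula becomes the tautology $\rg_{\F_q[[G]]}(\pi^i X[\pi^\infty]/\pi^{i+1}X[\pi^\infty]) = \rg_{\F_q[[G]]}(\pi^i X[\pi^\infty]/\pi^{i+1}X[\pi^\infty])$ once one knows $r = 0$ in that case.

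To glue these together I would track how $\pi^i(-)/\pi^{i+1}(-)$ behaves along the pseudo-isomorphism. Given a short exact sequence $0 \to A \to X \to B \to 0$ with the connecting data, the snake lemma applied to multiplication by $\pi$ yields an exact sequence $A[\pi] \to X[\pi] \to B[\pi] \to A/\pi A \to X/\pi X \to B/\pi X \to 0$ (and similarly after first passing to $\pi^i$), so the alternating sum of the $\F_q[[G]]$-ranks is preserved provided the outer terms contributed by pseudo-null modules have vanishing rank. Applying this with $A$ or $B$ pseudo-null (whence $A[\pi], A/\pi A$ are $\F_q[[G]]$-torsion), we get that $\rg_{\F_q[[G]]}(\pi^i X/\pi^{i+1}X)$ equals the corresponding rank for $\Ok[[G]]^r \oplus X[\pi^\infty] \oplus Z$, which by additivity of $\F_q[[G]]$-rank over direct sums is $r + \rg_{\F_q[[G]]}(\pi^i X[\pi^\infty]/\pi^{i+1}X[\pi^\infty]) + 0$. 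Rearranging gives the claim.

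The main obstacle is making the structure-theoretic input precise: $\Ok[[G]]$ for a general pro-$p$ $p$-adic Lie group $G$ is noncommutative, so one does not literally have the classical Iwasawa structure theorem, and one must instead work with the dimension theory of \cite{goodearl-warfield} and the notion of pseudo-null ($\F_q[[G]]$-torsion after reduction, or equivalently codimension $\ge 2$) module, checking that "rank mod $\pi$" and "$\F_q[[G]]$-rank of $\pi^i/\pi^{i+1}$" are additive on short exact sequences up to pseudo-null error. Once that bookkeeping is in place — essentially the content of Remark~\ref{finitesum} combined with the snake-lemma computation above — the identity follows formally. I would also double-check the edge behaviour: the formula must hold for every $i \in \N$ including $i = 0$, which is automatic from the argument, and the finiteness of the sum defining $\mu(X)$ in \eqref{eq:mu} is consistent with $\pi^i X[\pi^\infty]/\pi^{i+1}X[\pi^\infty]$ vanishing for $i$ large.
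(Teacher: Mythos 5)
Your plan rests on a structure-theoretic decomposition that is not available over the noncommutative ring $\mathcal{O}[[G]]$ and, even where it is, it quietly needs the very identity the paper cites. Concretely: the claim that a rank-$r$ module is pseudo-isomorphic to $\mathcal{O}[[G]]^r \oplus X[\pi^\infty] \oplus Z$ fails for a $p$-adic Lie group $G$ of dimension $\geq 2$ -- a finitely generated torsion-free $\mathcal{O}[[G]]$-module of rank $r$ is only pseudo-isomorphic to a \emph{reflexive} module, and reflexive does not imply free once the ring has global dimension $\geq 3$; likewise, the torsion submodule does not split off, nor does it decompose into a $\pi$-primary part plus a complement (Venjakob's structure theorem, which the paper does invoke elsewhere, applies only to $\pi$-primary modules). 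Moreover, for the torsion-free base case you would still have to show $\rg_{\F_q[[G]]}(M/\pi M)=r$; tracing this through an embedding $M\hookrightarrow \mathcal{O}[[G]]^r$ with torsion cokernel $C$ produces exactly the identity $\rg_{\F_q[[G]]}(C[\pi])=\rg_{\F_q[[G]]}(C/\pi C)$, which is the Euler-characteristic formula you are implicitly trying to avoid.

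There is also a technical gap in the gluing step. The phrase ``and similarly after first passing to $\pi^i$'' is not justified: the functor $X\mapsto \pi^iX$ is not exact, so from $0\to A\to X\to B\to 0$ one only gets $0\to A\cap\pi^iX\to \pi^iX\to\pi^iB\to 0$, and $A\cap\pi^iX$ may be strictly larger than $\pi^iA$; the six-term snake sequence you write down is therefore not the one you actually get for $i>0$. The paper avoids all of this by citing the Euler-characteristic identity of Lim (\cite[Proposition~4.12]{lim_fuer_howson}), namely $\rg_{\F_q[[G]]}(M/\pi M)=\rg_{\F_q[[G]]}(M[\pi])+\rg_{\mathcal{O}[[G]]}(M)$, applied once to $M=\pi^iX$ and once to $M=\pi^iX[\pi^\infty]$, and then simply observes $(\pi^iX)[\pi]=(\pi^iX[\pi^\infty])[\pi]$, $\rg_{\mathcal{O}[[G]]}(\pi^iX)=r$, and $\rg_{\mathcal{O}[[G]]}(\pi^iX[\pi^\infty])=0$. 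That two-line computation sidesteps the structure theory entirely and is insensitive to the noncommutativity of $\mathcal{O}[[G]]$; I would recommend rebuilding your argument around that formula rather than around a decomposition of $X$.
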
 
\begin{proof} 
  By \cite[Proposition~4.12]{lim_fuer_howson} 
  $$ \rg_{\F_q[[G]]}(\pi^iX/\pi^{i+1}X) = \rg_{\F_q[[G]]}((\pi^iX)[\pi]) + \rg_{\mathcal{O}[[G]]}(\pi^i X)$$ 
  and 
  $$ \rg_{\F_q[[G]]}(\pi^iX[\pi^\infty]/\pi^{i+1}X[\pi^\infty]) = \rg_{\F_q[[G]]}((\pi^iX[\pi^\infty])[\pi]) + \rg_{\mathcal{O}[[G]]}(\pi^i X[\pi^\infty]). $$ 
  Now $(\pi^i X)[\pi] = (\pi^i X[\pi^\infty])[\pi]$, $\rg_{\mathcal{O}[[G]]}(\pi^iX) = r$ and 
  $\rg_{\mathcal{O}[[G]]}(\pi^iX[\pi^\infty]) = 0$.  
\end{proof} 

The most important class of admissible $p$-adic Lie extensions are the $\Z_p$-ex\-ten\-sions. A \emph{$\Z_p$-extension} $K_\infty/K$ is a normal extension such that $G = \Gal(K_\infty/K)$ is isomorphic to the additive group of $p$-adic integers. In this special case, the theory of finitely generated $\mathcal{O}[[G]]$-modules is very well understood: the completed group ring $\mathcal{O}[[G]]$ is isomorphic to the ring $\Lambda:=\mathcal{O}[[T]]$ of formal power series in one variable. Each finitely generated torsion $\Lambda$-module $X$ is pseudo-isomorphic to an \emph{elementary} $\Lambda$-module of the form 
$$ E_X = \bigoplus_{i=1}^s \Lambda/(\pi^{e_i}) \oplus \bigoplus_{j = 1}^t \Lambda/(h_j), $$ 
where $h_1, \ldots, h_t \in \Lambda$ are so-called distinguished polynomials. Here \emph{pseudo-\-iso\-mor\-phic} means that there exists a $\Lambda$-module homomorphism $\varphi: X \longrightarrow E_X$ with finite kernel and cokernel. One defines the \emph{(classical) Iwasawa invariants} of $X$ by ${\mu(X) := \sum_{i=1}^s e_i}$ and ${\lambda(X) := \sum_{j=1}^t \deg(h_j)}$. This notation is well-defined since the classical $\mu$-invariant coincides with the invariant $\mu(X)$ given in \eqref{eq:mu} above in the special case of $\Z_p$-extensions:  
\begin{lemma}  
\label{lemma:invariants} 
  Let $X$ be a finitely generated $\Lambda$-module. Then the classical Iwasawa $\mu$-invariant is equal to 
  \[\sum_{i=0}^{\infty} \rg_{\F_q[[T]]}(\pi^i X[\pi^\infty] /\pi^{i+1}X[\pi^\infty]). \]\end{lemma}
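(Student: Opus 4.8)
The plan is to reduce the claim about the classical Iwasawa $\mu$-invariant of a finitely generated $\Lambda$-module $X$ to the $\pi$-primary case, and then compute the right-hand side directly from the elementary module structure. The key observation is that both sides of the claimed identity depend only on $X[\pi^\infty]$: on the right-hand side this is immediate from the appearance of $X[\pi^\infty]$ in the expression, and on the left-hand side it holds because the classical $\mu$-invariant of $X$ equals the classical $\mu$-invariant of its torsion submodule, which in turn equals that of the elementary module $E_X = \bigoplus_{i=1}^s \Lambda/(\pi^{e_i}) \oplus \bigoplus_{j=1}^t \Lambda/(h_j)$, and the $\pi$-primary part of $E_X$ is exactly $\bigoplus_{i=1}^s \Lambda/(\pi^{e_i})$ while the $\lambda$-part $\bigoplus_j \Lambda/(h_j)$ contributes nothing. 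So I may assume from the start that $X$ is $\pi$-primary and pseudo-isomorphic to $\bigoplus_{i=1}^s \Lambda/(\pi^{e_i})$, with $\mu(X) = \sum_{i=1}^s e_i$.

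First I would check that the right-hand sum is a pseudo-isomorphism invariant. If $\varphi\colon X \to Y$ has finite kernel and cokernel, then for each $i$ the induced map $\pi^i X[\pi^\infty] \to \pi^i Y[\pi^\infty]$ still has finite kernel and cokernel, hence so does the map on the quotients $\pi^i X[\pi^\infty]/\pi^{i+1}X[\pi^\infty] \to \pi^i Y[\pi^\infty]/\pi^{i+1}Y[\pi^\infty]$; since the $\F_q[[T]]$-rank is insensitive to finite kernel and cokernel (finite $\F_q[[T]]$-modules have rank zero), the summands agree term by term. Thus it suffices to evaluate the sum on the elementary module $E = \bigoplus_{i=1}^s \Lambda/(\pi^{e_i})$ itself. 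Here the computation is explicit: for fixed $k$, $\pi^k E/\pi^{k+1}E \cong \bigoplus_{i \colon e_i > k} \Lambda/(\pi^{e_i})\cdot\pi^k/\Lambda\cdot\pi^{k+1} \cong \bigoplus_{i\colon e_i > k} \F_q[[T]]$, so its $\F_q[[T]]$-rank is $\#\{ i : e_i > k\}$. Summing over $k \ge 0$ gives $\sum_{k \ge 0} \#\{i : e_i > k\} = \sum_{i=1}^s e_i = \mu(X)$, as desired.

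There is one bookkeeping point worth being careful about: one should note that replacing $X$ by the elementary module of its torsion submodule, and then discarding the $\Lambda/(h_j)$ summands, does not change either side. For the left-hand side this is the standard fact that $\mu$ is a pseudo-isomorphism invariant and that $\mu(\Lambda/(h)) = 0$ for a distinguished polynomial $h$. For the right-hand side one uses that $\Lambda/(h)$ is $\pi$-torsion free, so $(\Lambda/(h))[\pi^\infty] = 0$ and it contributes nothing, and that $X[\pi^\infty]$ is pseudo-isomorphic to $X_{\mathrm{tors}}[\pi^\infty] = E[\pi^\infty] = \bigoplus_{i=1}^s \Lambda/(\pi^{e_i})$ (the non-torsion part of $X$, being free after localization, has no $\pi$-power torsion up to finite modules). **The main obstacle** is not any single hard step but rather making these reductions cleanly and in the right order, so that the final identity is assembled from (i) $\mu$-invariance under pseudo-isomorphism, (ii) rank-invariance of the right-hand sum under pseudo-isomorphism, and (iii) the direct computation on $\bigoplus \Lambda/(\pi^{e_i})$; alternatively one can observe that Lemma~\ref{lemma:mu} with $r = 0$ already identifies the summands on the right with $\rg_{\F_q[[T]]}(\pi^i X[\pi^\infty]/\pi^{i+1}X[\pi^\infty])$ and reduce purely to computing those for the elementary $\pi$-primary module.
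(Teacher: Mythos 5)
Your proof is correct and follows essentially the same route as the paper: reduce via the structure theorem to the elementary module, observe that each summand $\rg_{\F_q[[T]]}(\pi^i X[\pi^\infty]/\pi^{i+1}X[\pi^\infty])$ is a pseudo-isomorphism invariant, and compute it on $\bigoplus_i\Lambda/(\pi^{e_i})$ to get $\#\{i : e_i > k\}$, whose sum over $k$ recovers $\sum_i e_i$. The only presentational difference is that the paper works with the full elementary module $\Lambda^r\oplus\bigoplus\Lambda/(\pi^{e_i})\oplus E_\lambda$ at once and simply asserts the term-by-term rank equality, whereas you first strip off the free and $\lambda$-parts and then spell out why a pseudo-isomorphism $\varphi\colon X\to Y$ induces maps with finite kernel and cokernel on each $\pi^i X[\pi^\infty]$ (which does hold: the cokernel $\pi^iY[\pi^\infty]/\pi^i\varphi(X[\pi^\infty])$ is annihilated by a power of the maximal ideal, since $Y[\pi^\infty]/\varphi(X[\pi^\infty])$ is).
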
 
  \begin{proof} 
  This proof is well-known (see, e.g.~\cite[Section~3.4]{venjakob}), but we recall it for the convenience of the reader. Let $E$ be an elementary $\Lambda$-module that is pseudo-isomorphic to $X$. Then we can write $E=\Lambda^r \oplus \bigoplus_{i=1}^s\Lambda/(\pi^{e_i})\oplus E_{\lambda}$ for a torsion $\Lambda$-module $E_{\lambda}$ which is a finitely generated free $\mathcal{O}$-module. Therefore the classical Iwasawa invariants can be computed as 
  \[ \mu(X)=\mu(E)= \sum_{i=0}^{\infty}\vert \{k\mid e_k \ge i+1\}\vert=\sum_{i=0}^{\infty} \rg_{\F_q[[T]]}(\pi^{i}X[\pi^\infty]/\pi^{i+1}X[\pi^\infty])\]
because 
\begin{eqnarray*} \rg_{\F_q[[T]]}(\pi^iX[\pi^\infty]/\pi^{i+1}X[\pi^\infty]) & = &  \rg_{\F_q[[T]]}(\pi^iE[\pi^\infty]/\pi^{i+1}E[\pi^\infty]) \\ 
& = & \vert \{k \mid e_k \ge i+1\} \vert\end{eqnarray*}  
for every $i \in \N$. 
\end{proof} 

\subsection{Fine Selmer groups} \label{subsection:selmergroups} 
For any discrete $\Z_p$-module $M$, we define the \emph{Pontryagin dual} of $M$ as 
\[ M^\vee = \textup{Hom}_{\textup{cont}}(M, \Q_p/\Z_p) \] 
(i.e. the set of continuous homomorphisms). 

If $K$ is a number field and $v$ denotes any prime of $K$, then $K_v$ will always denote the completion of $K$ at $v$. We denote by $G_K$ the Galois group $\Gal(\overline{K}/K)$, where $\overline{K}$ denotes a fixed algebraic closure of $K$. If $M$ is any $G_K$-module, then we let $H^i(K,M) := H^i(G_K,M)$ denote the corresponding Galois cohomology groups, $i \in \N$. Moreover, if $L/K$ is an algebraic extension, then we write $H^i(L/K,M) = H^i(\Gal(L/K),M)$ for brevity. 

Now fix a number field $K$. Let $V$ be a finite dimensional $F$-representation of $\Gal(\overline{K}/K)$ for some fixed algebraic closure $\overline{K}$ of $K$. Let $T$ be a Galois stable $\mathcal{O}$-lattice in $V$ and write $A=V/T$. Note that $A$ is as $\Ok$-module isomorphic to $(F/\mathcal{O})^l$ for some non-negative integer $l=\textup{dim}(V)$. In particular, $A = A[\pi^\infty]$ is $\pi$-primary, i.e. each element of the $\Ok$-module $A$ is annihilated by some power of $\pi$. By abuse of notation we will also refer to $l$ as the dimension of $A$. 

We denote by $S_p$ and $S_{\textup{ram}}(A)$ the set of primes of $K$ over $p$ and the set of primes of $K$ where $V$ is ramified. For any number field $L \supseteq K$ that is unramified outside $S_p\cup S_{\textup{ram}}(A)$ we denote by $A(L)$ the maximal submodule of $A$ on which $\Gal(\overline{K}/L)$ acts trivially. 

We mention an important and classical special case: let $A$ be an abelian variety defined over the number field $K$. We assume that $F = \Q_p$, i.e. $\Ok = \Z_p$. Let $T = T_p(A) = \varprojlim_n A[p^n]$ be the Tate module of $A$ and $V = T_p(A) \otimes_{\Z_p} \Q_p$; then $V/T \cong A[p^\infty]$. In this setting, for any field $L$, the group $A(L)[p^\infty]$ is the usual group of $L$-rational $p$-torsion points on $A$. Moreover, the ramified primes correspond to the primes of $K$ where $A$ has bad reduction, by the criterion of Néron-Ogg-Shafarevich (see \cite[Theorem~IV.4.1]{lang_diophantine}). 

For the number field $K$, $A = V/T$ as above and a prime number $p$, we define the ($\pi$-primary part of the) \emph{fine Selmer group} of $A$ over $K$ as 
\[ \textup{Sel}_{0,A}(K) = \ker \left( H^1(K, A) \longrightarrow \prod_v H^1(K_{v}, A) \right).\] 
In our applications it will be more convenient to work with the following definition: 
\[ \textup{Sel}_{0,A,\Sigma}(K) = \ker \left( H^1(K_\Sigma/K, A) \longrightarrow \prod_{v \in \Sigma} H^1(K_v, A) \right) \]  
for suitable (usually finite) sets $\Sigma$ of primes of $K$ containing all the ramified primes of the representation $V$ and all primes above $p$. Let $K_{\Sigma}$ be the maximal algebraic extension of $K$ unramified outside the primes in $\Sigma$. If $L\subseteq K_{\Sigma}$ is any, non-necessarily finite,  extension, then we define 
\[\textup{Sel}_{0,A,\Sigma}(L)=\varinjlim_{K\subseteq L'\subseteq L}\textup{Sel}_{0,A,\Sigma}(L'), \]
where $L'$ runs through all finite subextensions $K\subseteq L'\subseteq L$. Here we note that $K_\Sigma = L'_\Sigma$, since $L/K$ is unramified outside of $\Sigma$, and therefore each $\textup{Sel}_{0,A,\Sigma}(L')$ is a subgroup of $H^1(K_\Sigma/K, A)$. 

A priori this definition depends on the choice of $\Sigma$. But if the cyclotomic $\Z_p$-extension of $K$, denoted by $K_{\infty}^c$, is contained in $L$, then the definition becomes independent of the set $\Sigma$ by a result of Sujatha and Witte (see \cite[Section~3]{ramdorai-witte}). In fact their proof depends only on the fact that none of the primes in $\Sigma$ is totally split in $K^c_{\infty}/K$. Therefore the definition of the fine Selmer group does not depend on the choice of $\Sigma$ if we consider strongly $\Sigma$-admissible extensions $K_{\infty}/K$.

Finally, we define  $\pi^i$-fine Selmer groups, $i \in \N$, as 
\[ \textup{Sel}_{0,A[\pi^i],\Sigma}(K) = \ker \left( H^1(K_\Sigma/K, A[\pi^i]) \longrightarrow \prod_{v \in \Sigma} H^1(K_{v}, A[\pi^i]) \right),\] 
where $\Sigma$ is as above. Note: these $\pi^i$-fine Selmer groups may depend on the choice of $\Sigma$ even for algebraic extensions $L$ of $K$ which contain the cyclotomic $\Z_p$-extension $K_\infty^c$ (see \cite[proof of Theorem~5.1]{lim-murty} for an example for abelian varieties). 

Now let $K_\infty/K$ be an admissible $p$-adic Lie extension, and let $\Sigma$ be a finite set of primes of $K$ which contains ${S_{\textup{ram}}(K_\infty/K) \cup S_p \cup S_{\textup{ram}}(A)}$ (if $p = 2$, then we assume that $K$ is totally imaginary).  Then we can define fine Selmer groups of $A$ over each number field $L \subseteq K_\infty$ containing $K$. We denote the corresponding Pontryagin duals by 
\[ Y_{A,\Sigma}^{(L)} = \textup{Sel}_{0,A,\Sigma}(L)^\vee, \] 
and we define the projective limit
\[  Y_{A,\Sigma}^{(K_\infty)} = \varprojlim_{K \subseteq L \subseteq K_\infty} Y_{A,\Sigma}^{(L)}\] 
with respect to the corestriction maps (where $L$ runs over the finite subextensions of $K_\infty/K$).

\section{Fine Selmer groups of congruent representations} \label{section:fine} 
The aim of this section is to study the relation between the Iwasawa invariants of the fine Selmer groups associated with two representations $V_1$ and $V_2$ defined over the same number field $K$. The representations we consider will always satisfy a congruence condition, meaning that $A_1[\pi^l]$ and $A_2[\pi^l]$ are isomorphic as $G_K$-modules for some integer $l$ (where $A_i = V_i/T_i$ as usual). Note that this implies that the two representations have the same dimension $d$. We will always fix a set $\Sigma$ of primes in $K$ containing all ramified places for $A_1$ and $A_2$, and all places above $p$. Let $K_{\infty}/K$ be an admissible $p$-adic Lie extension. We consider two cases: 
\begin{compactenum}[i)]
\item $K_{\infty}/K$ is strongly $\Sigma$-admissible (Section \ref{sec:generic}).
\item $K_{\infty}/K$ is admissible and $A(K_v)[\pi]=0$ for all $v\in \Sigma$ (Section \ref{sec:split}).
\end{compactenum}
Note that case ii) only becomes relevant if a prime of $\Sigma$ is completely split in $K_{\infty}/K$.

\subsection{The generic case}
\label{sec:generic}
In this section we prove Theorem \ref{thm:A}. The main ingredient in the proof is a relation between $\textup{Sel}_{0,A}(L)[\pi^l]$ and $\textup{Sel}_{0,A[\pi^l]}(L)$ for any finite subextension $K\subseteq L\subseteq K_{\infty}$ of the $p$-adic Lie extension $K_{\infty}/K$.

\begin{lemma} \label{lemma:vergleich} 
Let $A$ be associated with a representation of $G_K$ of dimension $d$ and let $\Sigma$ be a finite set of primes of $K$ containing $S_p$ and $S_{\textup{ram}}(A)$. If $p = 2$, then we assume that $K$ is totally imaginary. Let $L/K$ be a finite extension which is contained in $K_\Sigma$. Then 
\[\vert v_p(|\textup{Sel}_{0,A,\Sigma}(L))[\pi^k]|)-v_p(|\textup{Sel}_{0,A[\pi^k],\Sigma}(L)|)\vert \le fdk(1+|\Sigma(L)|) \] 
for each integer $k \ge 1$, where $\Sigma(L)$ denotes the set of primes of $L$ above $\Sigma$ and $f$ is the inertia degree of $p$ in $F/\Q_p$. 
\end{lemma}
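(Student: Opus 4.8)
The plan is to compare the two kernels by working with the short exact sequence of $G_K$-modules
\[
0 \longrightarrow A[\pi^k] \longrightarrow A \stackrel{\pi^k}{\longrightarrow} A \longrightarrow 0,
\]
which (since $A = A[\pi^\infty]$ is divisible) gives rise to a long exact sequence in $H^\bullet(K_\Sigma/K, -)$ and, for each $v \in \Sigma$, in $H^\bullet(K_v, -)$; everything can be base-changed to the finite extension $L/K \subseteq K_\Sigma$. From the global sequence one extracts
\[
0 \longrightarrow A(L)/\pi^k A(L) \longrightarrow H^1(L_\Sigma/L, A[\pi^k]) \longrightarrow H^1(L_\Sigma/L,A)[\pi^k] \longrightarrow 0,
\]
and similarly local sequences $0 \to A(L_w)/\pi^k \to H^1(L_w, A[\pi^k]) \to H^1(L_w,A)[\pi^k] \to 0$ for each $w \in \Sigma(L)$. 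First I would set up a commutative diagram whose rows are these two exact sequences (global on top, product of local ones on the bottom) and whose vertical maps are the localisation maps; the fine Selmer groups $\textup{Sel}_{0,A[\pi^k],\Sigma}(L)$ and $\textup{Sel}_{0,A,\Sigma}(L)[\pi^k]$ are by definition the kernels of the outer vertical maps, while the kernel of the left vertical map is controlled by $A(L)/\pi^k A(L)$ and $\prod_w A(L_w)/\pi^k A(L_w)$.

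The key step is then a snake-lemma / diagram chase in this diagram to produce an exact sequence relating $\textup{Sel}_{0,A[\pi^k],\Sigma}(L)$, $\textup{Sel}_{0,A,\Sigma}(L)[\pi^k]$, and subquotients of $A(L)/\pi^k A(L)$ and of $\prod_{w \in \Sigma(L)} A(L_w)/\pi^k A(L_w)$. Concretely I expect to get that the ``difference'' between the two fine Selmer groups is squeezed between $A(L)[\pi^k]$-type terms on the one side and $\prod_w A(L_w)[\pi^k]$-type terms on the other. To turn this into the stated numerical bound I would use the crude but sufficient estimate that each of $A(L)/\pi^k A(L)$, $A(L)[\pi^k]$, $A(L_w)/\pi^k A(L_w)$ and $A(L_w)[\pi^k]$ has $p$-adic valuation of its order at most $fdk$: indeed $A \cong (F/\Ok)^d$ as an $\Ok$-module, so any sub- or subquotient module killed by $\pi^k$ is a quotient of $(\Ok/\pi^k)^d$, whose order is $q^{dk} = p^{fdk}$. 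Summing the one global contribution and the $|\Sigma(L)|$ local contributions yields the bound $fdk(1+|\Sigma(L)|)$, and since the comparison inequality is symmetric in the two quantities we get the absolute value.

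The main obstacle I anticipate is bookkeeping in the diagram chase: one must be careful that the localisation maps $H^1(L_\Sigma/L,A[\pi^k]) \to \prod_w H^1(L_w, A[\pi^k])$ and $H^1(L_\Sigma/L,A) \to \prod_w H^1(L_w,A)$ are compatible with the multiplication-by-$\pi^k$ connecting maps, and that one correctly identifies which subquotients of $A(L)/\pi^k A(L)$ and $\prod_w A(L_w)/\pi^k A(L_w)$ actually appear (rather than over- or under-counting). A secondary point to check is that for $p=2$ the hypothesis that $K$ (hence $L$) is totally imaginary guarantees there are no issues at the archimedean places, so that restricting the local conditions to the finite primes in $\Sigma(L)$ is harmless; with that in place the cohomological sequences behave exactly as in the odd-$p$ case. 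Once the exact sequence is in hand, the valuation estimates are entirely routine.
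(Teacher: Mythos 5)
Your proposal is correct and takes essentially the same approach as the paper: the same short exact sequence $0 \to A[\pi^k] \to A \xrightarrow{\pi^k} A \to 0$, the same $3\times 3$ commutative diagram relating global $K_\Sigma/L$-cohomology to local cohomology (you organise rows by the multiplication-by-$\pi^k$ sequences while the paper organises them by the fine-Selmer defining sequences, but this is the same diagram read transposed), and the same order estimates $|A(L)/\pi^k A(L)|, |A(L_w)/\pi^k A(L_w)| \le q^{dk}$. One small slip in wording: $\textup{Sel}_{0,A[\pi^k],\Sigma}(L)$ and $\textup{Sel}_{0,A,\Sigma}(L)[\pi^k]$ are the kernels of the \emph{middle and right} vertical localisation maps, not the outer ones, though your next sentence shows you have the correct picture in mind.
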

\begin{rem}
  Note that if $A$ is the $p$-primary part of an abelian variety of dimension $d$ then the corresponding representation has dimension $2d$. 
\end{rem}
\begin{proof}
We start from the following commutative diagram
\[\begin{tikzcd}
0\arrow[r]&\textup{Sel}_{0,A[\pi^k]}(L)\arrow[r]\arrow[d,"s"]&H^1(K_{\Sigma}/L,A[\pi^k])\arrow[r]\arrow[d,"h"]& \bigoplus_{v\in \Sigma(L)}H^1({L_{v}},A[\pi^k])\arrow[d,"g"] \\0 \arrow[r]&\textup{Sel}_{0,A}(L)[\pi^k]\arrow[r]&H^1(K_{\Sigma}/L,A)[\pi^k]\arrow[r]& \bigoplus_{v\in \Sigma(L)}H^1({L_{v}},A)[\pi^k]\end{tikzcd}
\]
Consider the exact sequence
\[0\longrightarrow A[\pi^k]\longrightarrow A \stackrel{\cdot \pi^k}{\longrightarrow} A\longrightarrow 0.\]
The surjectivity follows from the fact that $A$ is divisible as $\mathcal{O}$-module. Note further that the representation $V$ is unramified outside $\Sigma$. Thus, there is a well-defined action of $\Gal(K_{\Sigma}/L)$ on $A$ and we can take $K_{\Sigma}/L$-cohomology of the exact sequence in order to see that the map $h$ is surjective.
Moreover, 
\begin{eqnarray*} 
   \ker(h) & \cong & \coker(\pi^k\colon H^0(K_{\Sigma}/L,A)\longrightarrow H^0(K_{\Sigma}/L,A)) \\ 
             & = & A(L)/\pi^k A(L). 
\end{eqnarray*} 
The last equality is due to the fact that all ramified primes are contained in $\Sigma$. 
Analogously, we see that $g$ is surjective and that
$$\ker(g)\cong \bigoplus_{v\in \Sigma(L)}A(L_{v})/\pi^k A(L_{v}). $$ 
We obtain the bounds $v_p(|\ker(h)|) \le dkf$ and $v_p(|\ker (g)|)\le dkf|\Sigma(L)|$. Using the exact sequence
\begin{align} \label{eq:star} 0\longrightarrow \ker(s)\longrightarrow \textup{Sel}_{0,A[\pi^k],\Sigma}(L)\longrightarrow \textup{Sel}_{0,A,\Sigma}(L)[\pi^k]\longrightarrow \coker(s)\longrightarrow 0,\end{align} 
we may conclude that $\vert v_p(|\textup{Sel}_{0,A[\pi^k],\Sigma}(L)|)- v_p(|\textup{Sel}_{0,A,\Sigma}(L)[\pi^k]|)\vert$ is bounded by \begin{align*}
 v_p(|\ker(s)|)+v_p(|\coker(s)|)\le v_p(|\ker(h)|)+v_p(|\ker(g)|)\le fdk+dfk|\Sigma(L)|.\end{align*}
\end{proof} 

\begin{cor} \label{cor:vergleich} 
   Let $A$ be associated with a representation of $G_K$, and let $\Sigma$ be a finite set of primes of $K$ containing $S_p$ and $S_{\textup{ram}}(A)$. If $p = 2$, then we assume that $K$ is totally imaginary. Let $K_\infty/K$ be a strongly $\Sigma$-admissible $p$-adic Lie extension. Then 
   $\rg_{\F_q[[G]]}(\pi^i Y_{A,\Sigma}^{(K_\infty)}/\pi^{i+1}Y_{A,\Sigma}^{(K_\infty)})$ equals 
   \[\rg_{\F_q[[G]]}(\varprojlim_{K \subseteq L \subseteq K_\infty} \Sel_{0,A[\pi^{i+1}],\Sigma}(L)^\vee / \varprojlim_{K \subseteq L \subseteq K_\infty} \Sel_{0,A[\pi^{i}],\Sigma}(L)^\vee)\] 
   for every $i \in \N$, where $L$ runs over the finite subfields of $K_\infty/K$. 
\end{cor}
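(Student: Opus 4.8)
The plan is to pass to the projective limit in the fundamental four-term exact sequence \eqref{eq:star} from Lemma~\ref{lemma:vergleich}, applied with $k = i$ and $k = i+1$, and to control the error terms $\ker(s)$, $\coker(s)$ using the uniform bounds on $|\ker(h)|$ and $|\ker(g)|$ obtained there. First I would record that for a strongly $\Sigma$-admissible extension the Pontryagin dual $Y_{A,\Sigma}^{(K_\infty)}$ is a finitely generated $\Ok[[G]]$-module, so that $\rg_{\F_q[[G]]}(\pi^i Y_{A,\Sigma}^{(K_\infty)}/\pi^{i+1} Y_{A,\Sigma}^{(K_\infty)})$ is finite and can be detected after tensoring with $\F_q[[G]]$; the point is that this rank is insensitive to finitely generated $\F_q[[G]]$-torsion, and more importantly (this is the role of strong $\Sigma$-admissibility) to modules that are ``small'' in the sense of being finitely generated over $\Ok[[H']]$ for a proper quotient, or — more to the point here — finite over $\Ok$. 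Concretely, I would invoke the structure of $G$ containing a $\Z_p$-extension $L$ in which none of the relevant primes splits completely: this forces the local error terms to die in the limit in the sense relevant to $\F_q[[G]]$-rank.

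The core computation: dualising \eqref{eq:star} over each finite $L \subseteq K_\infty$ and taking inverse limits (inverse limits are exact on the relevant profinite/compact modules, the transition maps being the dual corestriction maps) yields an exact sequence
\[
0 \to \varprojlim_L \coker(s_L)^\vee \to \varprojlim_L \Sel_{0,A,\Sigma}(L)[\pi^k]^\vee \to \varprojlim_L \Sel_{0,A[\pi^k],\Sigma}(L)^\vee \to \varprojlim_L \ker(s_L)^\vee \to 0
\]
up to the usual $\varprojlim^1$ term, which vanishes since all groups in sight are finite (Mittag-Leffler). Doing this for $k = i$ and $k = i+1$ and comparing, I would form the quotient
\[
\varprojlim_L \Sel_{0,A[\pi^{i+1}],\Sigma}(L)^\vee \big/ \varprojlim_L \Sel_{0,A[\pi^{i}],\Sigma}(L)^\vee
\]
and relate it to $\varprojlim_L \Sel_{0,A,\Sigma}(L)[\pi^{i+1}]^\vee / \varprojlim_L \Sel_{0,A,\Sigma}(L)[\pi^{i}]^\vee$, modulo contributions from the $\varprojlim$ of $\ker(s_L)$ and $\coker(s_L)$. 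By Lemma~\ref{lemma:vergleich} each $\ker(s_L)$ and $\coker(s_L)$ is a subquotient of $\ker(h_L) \oplus \ker(g_L) \cong A(L)/\pi^k A(L) \oplus \bigoplus_{v} A(L_v)/\pi^k A(L_v)$. The global piece $A(L)/\pi^k A(L)$ is bounded by $\pi^{dk}$-torsion over $\Ok$ uniformly in $L$; its inverse limit over $K_\infty$ is therefore a module that is finitely generated, in fact finite, over $\Ok$ — hence has $\F_q[[G]]$-rank zero once $\dim G \ge 1$. The local pieces $\bigoplus_{v \mid w} A(L_v)/\pi^k A(L_v)$ assemble, in the limit, into modules induced from the decomposition subgroups $G_w \subseteq G$ of the primes $w$ of $K$ in $\Sigma$; since $K_\infty/K$ is strongly $\Sigma$-admissible these decomposition groups contain a prime of $K$ that is not completely split in the auxiliary $\Z_p$-extension $L$, so each $G_w$ is infinite, and $\Ok[[G/G_w]]$ — onto which these local limits are finitely generated — has $\F_q[[G]]$-rank zero. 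Hence all the error terms contribute $0$ to $\rg_{\F_q[[G]]}$, and
\[
\rg_{\F_q[[G]]}\!\big(\varprojlim_L \Sel_{0,A[\pi^{i+1}],\Sigma}(L)^\vee / \varprojlim_L \Sel_{0,A[\pi^{i}],\Sigma}(L)^\vee\big) = \rg_{\F_q[[G]]}\!\big(Y_{A,\Sigma}^{(K_\infty)}[\pi^{i+1}]^\vee\text{-quotient-thing}\big).
\]
Finally I would identify the right-hand side with $\rg_{\F_q[[G]]}(\pi^i Y_{A,\Sigma}^{(K_\infty)}/\pi^{i+1} Y_{A,\Sigma}^{(K_\infty)})$: dualising, $\Sel_{0,A,\Sigma}(L)[\pi^{i+1}]^\vee = Y_{A,\Sigma}^{(L)}/\pi^{i+1} Y_{A,\Sigma}^{(L)}$, and passing to the limit $\varprojlim_L Y^{(L)}/\pi^{i+1} Y^{(L)} = Y_{A,\Sigma}^{(K_\infty)}/\pi^{i+1} Y_{A,\Sigma}^{(K_\infty)}$ (using finiteness of the quotients, or that $\pi^{i+1}$-multiplication and $\varprojlim$ commute appropriately for finitely generated modules), so the quotient of the $k=i+1$ limit by the $k=i$ limit is exactly $\pi^i Y / \pi^{i+1} Y$ on the nose, or at least up to an $\F_q[[G]]$-rank-zero discrepancy coming from the $\ker/\coker(s)$ terms already dealt with.

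The main obstacle I anticipate is twofold. First, justifying that inverse limit is exact here and that the $\varprojlim^1$ terms vanish: this needs the Mittag-Leffler condition, which follows from finiteness of all the finite-level groups (the fine Selmer groups over number fields are finite under the running hypotheses, e.g.\ via finiteness of $\Sha$-type arguments or directly), but one should state clearly why these groups are finite or at least why the relevant transition systems are ML. Second, and more delicate, is the verification that the limits of the local terms $\bigoplus_{v} A(L_v)/\pi^k A(L_v)$ really have $\F_q[[G]]$-rank zero — this is where strong $\Sigma$-admissibility is genuinely used, and the argument is: the inverse limit over $L$ of $\bigoplus_{v \mid w} M_v$ for a finite $G_K$-module $M$ is, as an $\Ok[[G]]$-module, a subquotient of $\mathrm{Ind}_{G_w}^{G}(\text{finite})$, which is finitely generated over $\Ok[[G/G_w]]$; since the image of the decomposition group $G_w$ in $\Gal(L/K) \cong \Z_p$ is nontrivial (no prime of $\Sigma \cup S_{\mathrm{ram}}$ splits completely in $L$), $G/G_w$ has dimension strictly less than $\dim G$, hence $\Ok[[G/G_w]]$ has $\F_q[[G]]$-rank $0$ — this last implication using that $\Ok[[G]]$ is a domain and $\F_q[[G]]$-rank is multiplicative/monotone along such quotients, cf.\ the rank theory from \cite{howson}, \cite{lim_fuer_howson}. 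I would package this as a short lemma if it is not already available, then combine everything above to conclude.
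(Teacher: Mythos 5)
Your proposal follows the same overall strategy as the paper: dualize the four-term exact sequence \eqref{eq:star} for $k=i$ and $k=i+1$, pass to projective limits over the finite layers $L$, and then argue that the resulting kernel/cokernel contribute nothing to the $\F_q[[G]]$-rank. The identification of $Y_{A,\Sigma}^{(K_\infty)}/\pi^k Y_{A,\Sigma}^{(K_\infty)}$ with $\varprojlim_L Y_{A,\Sigma}^{(L)}/\pi^k Y_{A,\Sigma}^{(L)}$, the finiteness of the terms in \eqref{eq:star} (so Mittag--Leffler applies), and the split into a global piece (finite, hence harmless) and a local piece (controlled by strong $\Sigma$-admissibility) all agree with what is done in the paper.

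The one point where you diverge, and where your write-up is imprecise, is the argument that the local error module has $\F_q[[G]]$-rank zero. You argue via the decomposition groups $G_w$, asserting that the limit is ``finitely generated over $\Ok[[G/G_w]]$''. Since $G_w$ need not be normal in $G$, there is no ring $\Ok[[G/G_w]]$, and the phrase ``$\Ok[[G/G_w]]$ has $\F_q[[G]]$-rank $0$'' does not parse as stated. What is true and salvages the idea is: the local limit at $w$ is a quotient of $\Ok[[G]]\otimes_{\Ok[[G_w]]} N_w$ with $N_w$ a finite $\Ok$-module, and after reduction mod $\pi$ the finite $\F_q[[G_w]]$-module $N_w/\pi N_w$ has a nonzero annihilator in $\F_q[[G_w]]\subseteq\F_q[[G]]$ precisely because $G_w$ is infinite, which strong $\Sigma$-admissibility guarantees. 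The paper instead shows that the cokernel module $M$ is finitely generated over $\Ok[[H]]$ for the fixed subgroup $H$ corresponding to the auxiliary $\Z_p$-extension (after replacing $K$ by a finite layer so that the $\Sigma$-primes do not split at all in $K_\infty^H/K$), and then cites the result of \cite{5_people} to produce an annihilator in $\Ok[[G]]\cong\Ok[[H]][[T]]$ that is not a power of $\pi$. This is cleaner and avoids worrying about normality of $G_w$, but the two arguments are aimed at the same conclusion. If you tighten your treatment of the induced-module step along the lines above, your proof goes through and matches the paper's in substance.
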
 
\begin{proof} 
   Let $k \in \N$. For every finite subextension $L \subseteq K_\infty$ of $K$, we consider the exact sequence 
   \[ 0 \longrightarrow M^{(L)} \longrightarrow \Sel_{0,A,\Sigma}(L)[\pi^k]^\vee \longrightarrow \Sel_{0,A[\pi^k],\Sigma}(L)^\vee \longrightarrow N^{(L)} \longrightarrow 0\] 
   which is obtained from \eqref{eq:star} by taking Pontragin duals. In particular, $N^{(L)}$ is a finite abelian group of order at most $p^{dkf}$, and $M^{(L)} = \bigoplus_{v \in \Sigma(L)} G_v^{(L)}$, where each $G_v^{(L)}$ is a finite abelian group of order at most $p^{dkf}$. 
   
   Taking the projective limits along the $L \subseteq K_\infty$, we obtain an exact sequence 
    \begin{align} \label{eq:vergleich} 0 \longrightarrow M \longrightarrow Y_{A,\Sigma}^{(K_\infty)}/\pi^k Y_{A,\Sigma}^{(K_\infty)} \longrightarrow \varprojlim_{K \subseteq L \subseteq K_\infty} \Sel_{0,A[\pi^k],\Sigma}(L)^\vee \longrightarrow N \longrightarrow 0, \end{align}  
    where $N$ is a finite abelian group and where $M$ is finitely generated over $\mathcal{O}[[H]]$ because no prime $v \in \Sigma$ splits completely in the $\Z_p$-extension $K_\infty^H$ of $K$ which is fixed by $H \subseteq G$. In fact, replacing $K$ by a finite subextension of $K_\infty^H$ if necessary (this does not affect the projective limit), we may assume that actually the primes $v \in \Sigma$ do not split at all in $K_\infty^H/K$. 
    
    Letting $\Gamma := G/H \cong \Z_p$, the group ring $\mathcal{O}[[\Gamma]]$ can be identified with the ring $\Lambda = \mathcal{O}[[T]]$. Since $M$ is finitely generated over $\mathcal{O}[[H]]$, there exists a non-constant annihilator of $M$ in ${\mathcal{O}[[G]] \cong \mathcal{O}[[H]][[T]]}$ by \cite[Proposition~2.3 and Theorem~2.4]{5_people}; in particular, the annihilator is not a power of $\pi$ (note: the result in \cite{5_people} is formulated for the case $\Ok = \Z_p$, but the proof goes through in our more general setting). 
    
    Considering now $k = i$ and $k = i+1$, we may conclude that there exists a non-constant annihilator in $\mathcal{O}[[G]]$ of the  cokernels and kernels of both maps \[Y_{A,\Sigma}^{(K_\infty)}/ \pi^i Y_{A,\Sigma}^{(K_\infty)} \longrightarrow \varprojlim_{K \subseteq L \subseteq K_\infty} \Sel_{0,A[\pi^{i}],\Sigma}(L)^\vee\] and \[Y_{A,\Sigma}^{(K_\infty)} / \pi^{i+1} Y_{A,\Sigma}^{(K_\infty)} \longrightarrow \varprojlim_{K \subseteq L \subseteq K_\infty} \Sel_{0,A[\pi^{i+1}],\Sigma}(L)^\vee. \] 
    Taking quotients proves the assertion of the corollary. 
\end{proof} 

We need one final auxiliary 
\begin{lemma} \label{lemma:G-iso} 
Let $A_1$ and $A_2$ be associated with two representations $V_1$ and $V_2$ of $G_K$, and let $\Sigma$ be a finite set of primes of $K$ which contains ${S_p \cup S_{\textup{ram}}(A_1) \cup S_{\textup{ram}}(A_2)}$. If $p = 2$, then we assume that $K$ is totally imaginary. 
We assume that $A_1[\pi^i]$ and $A_2[\pi^i]$ are isomorphic as $G_K$-modules for some $i \in \N$, $i \ge 1$. 

Then $\textup{Sel}_{0,A_1[\pi^i],\Sigma}(L)\cong \textup{Sel}_{0,A_2[\pi^i],\Sigma}(L)$ for every finite extension $L \subseteq K_\Sigma$ of $K$. 
\end{lemma}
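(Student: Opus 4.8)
The plan is to exploit the fact that the defining map of the $\pi^i$-fine Selmer group depends only on the $G_K$-module $A[\pi^i]$ (and on the auxiliary data $K_\Sigma$ and $\Sigma$), none of which changes when we replace $A_1[\pi^i]$ by the isomorphic module $A_2[\pi^i]$. Concretely, fix a $G_K$-module isomorphism $\varphi\colon A_1[\pi^i] \xrightarrow{\ \sim\ } A_2[\pi^i]$. Since $L/K$ is unramified outside $\Sigma$ we have $L_\Sigma = K_\Sigma$, so $\Gal(K_\Sigma/L)$ acts on both $A_1[\pi^i]$ and $A_2[\pi^i]$ and $\varphi$ is $\Gal(K_\Sigma/L)$-equivariant by restriction. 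Functoriality of group cohomology then gives an isomorphism $\varphi_*\colon H^1(K_\Sigma/L, A_1[\pi^i]) \xrightarrow{\ \sim\ } H^1(K_\Sigma/L, A_2[\pi^i])$.

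The key step is to check that $\varphi_*$ is compatible with the local restriction maps appearing in the definition of the $\pi^i$-fine Selmer groups, so that it carries the one kernel onto the other. For each prime $v \in \Sigma(L)$, choose a place $\bar v$ of $K_\Sigma$ above $v$; this determines a decomposition group $G_{L_v} = \Gal(\overline{L}_v/L_v)$ together with a continuous homomorphism $G_{L_v} \to \Gal(K_\Sigma/L)$. Via this homomorphism, $\varphi$ is also a $G_{L_v}$-module isomorphism $A_1[\pi^i] \to A_2[\pi^i]$, and hence induces $\varphi_{*,v}\colon H^1(L_v, A_1[\pi^i]) \xrightarrow{\ \sim\ } H^1(L_v, A_2[\pi^i])$. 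The square
\[
\begin{tikzcd}
H^1(K_\Sigma/L, A_1[\pi^i]) \arrow[r, "\varphi_*"] \arrow[d] & H^1(K_\Sigma/L, A_2[\pi^i]) \arrow[d] \\
H^1(L_v, A_1[\pi^i]) \arrow[r, "\varphi_{*,v}"] & H^1(L_v, A_2[\pi^i])
\end{tikzcd}
\]
commutes by naturality of restriction in both the coefficient module and the group homomorphism $G_{L_v}\to \Gal(K_\Sigma/L)$. Taking the product over $v \in \Sigma(L)$, we get a commutative diagram whose rows are the two defining sequences; since $\varphi_*$ and $\prod_v \varphi_{*,v}$ are isomorphisms, $\varphi_*$ restricts to an isomorphism of kernels
\[
\textup{Sel}_{0,A_1[\pi^i],\Sigma}(L) \xrightarrow{\ \sim\ } \textup{Sel}_{0,A_2[\pi^i],\Sigma}(L),
\]
which is the claim.

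The main obstacle — really only a bookkeeping point rather than a genuine difficulty — is to make sure the local squares commute honestly: the restriction map in the definition of the fine Selmer group implicitly depends on the choice of embedding $\overline{K}\hookrightarrow \overline{L}_v$ (equivalently, on the choice of $\bar v$ above $v$), and one must use the \emph{same} choice on both sides so that the two composites $G_{L_v}\to \Gal(K_\Sigma/L) \to \mathrm{Aut}(A_j[\pi^i])$ are literally the restrictions of a single pair of compatible actions intertwined by $\varphi$. A different choice of $\bar v$ only changes things by an inner automorphism of $\Gal(K_\Sigma/L)$, which acts trivially on $H^1$, so the resulting isomorphism of Selmer groups is in fact canonical; but for the proof it suffices to fix one choice throughout. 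No hypothesis beyond those in the statement is needed, and the assumption that $K$ is totally imaginary when $p=2$ is inherited from the ambient conventions (it plays no role here since $\Sigma$ consists of finite primes only).
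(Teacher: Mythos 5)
Your proof is correct and takes essentially the same approach as the paper: transport an $A_1[\pi^i] \cong A_2[\pi^i]$ isomorphism to a $\Gal(K_\Sigma/L)$-module isomorphism (using that $\Gal(\overline{K}/K_\Sigma)$ acts trivially), pass to global and local $H^1$ by functoriality, and match up the kernels. The paper states the compatibility of the local restriction squares as ``immediate''; you spell out the bookkeeping with the choice of place above $v$ and the inner-automorphism invariance of $H^1$, which is a welcome clarification but not a different argument.
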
 
\begin{proof} 
  Let $\phi \colon A_1[\pi^i] \longrightarrow A_2[\pi^i]$ be a $G_K$-module homomorphism. As $V_1$ and $V_1$ are unramified outside of $\Sigma$, the group $\Gal(\overline{K}/K_{\Sigma})$ acts trivially on $A_1$ and $A_2$ and we can interpret $\phi$ as a $\Gal(K_{\Sigma}/K)$-isomorphism. Then $\phi$ induces an isomorphism
  \[ \phi \colon H^1(K_\Sigma/L, A_1[\pi^i]) \longrightarrow H^1(K_\Sigma/L, A_2[\pi^i])\] 
  of $G_K$-modules. 
  
  For any prime $v$ of $L$, the inclusion $ G_{L_v} \hookrightarrow G_{L}$ of the local absolute Galois group at the completion $L_v$ of $L$ at $v$ induces an isomorphism \[H^1(L_v, A_1[\pi^i]) \longrightarrow H^1(L_v, A_2[\pi^i]). \] 
  The corresponding isomorphism between fine Selmer groups is now immediate. 
\end{proof}

Now we turn to the proof of our first main result. 
\begin{proof}[Proof of Theorem~\ref{thm:A}]
Let $l$ be such that $(\pi^lY_{A_1,\Sigma}^{(K_{\infty})})[\pi]$ is pseudo-null. 
By definition of the $\mu$-invariant (see \eqref{eq:mu}) and Lemma~\ref{lemma:mu}, we have 
\begin{align}
    \label{invariants}
\mu(Y_{A_1,\Sigma}^{(K_\infty)})&=\sum_{i=0}^{\infty} (\rg_{\F_q[[G]]}(\pi^iY_{A_1,\Sigma}^{(K_\infty)}/\pi^{i+1}Y_{A_1,\Sigma}^{(K_\infty)}) - r_1) \nonumber 
\\&=\sum_{i=0}^{l-1}  (\rg_{\F_q[[G]]}(\pi^iY_{A_1,\Sigma}^{(K_\infty)}/\pi^{i+1}Y_{A_1,\Sigma}^{(K_\infty)}) - r_1).\end{align}
Now Corollary~\ref{cor:vergleich} implies that for both $j = 1$ and $j = 2$, the $\F_q[[G]]$-rank of $\pi^iY_{A_j,\Sigma}^{(K_{\infty})}/\pi^{i+1}Y_{A_j,\Sigma}^{(K_{\infty})}$ equals 
\[ \rg_{\F_q[[G]]}(\varprojlim_{K \subseteq L \subseteq K_\infty} \textup{Sel}_{0,A_j[\pi^{i+1}],\Sigma}(L)^\vee/ \varprojlim_{K \subseteq L \subseteq K_\infty} \textup{Sel}_{0,A_j[\pi^i],\Sigma}(L)^\vee). \] 
 Using that $A_1[\pi^l]\cong A_2[\pi^l]$, Lemma~\ref{lemma:G-iso} implies that  $\textup{Sel}_{0,A_1[\pi^i],\Sigma}(L)\cong \textup{Sel}_{0,A_2[\pi^i],\Sigma}(L)$ for every $i \le l$. By  \eqref{invariants}, we may conclude that 
 \begin{align*}
\mu(Y_{A_1,\Sigma}^{(K_\infty)})&=\sum_{i=0}^{l-1} (\rg_{\F_q[[G]]}(\pi^iY_{A_1,\Sigma}^{(K_\infty)}/\pi^{i+1}Y_{A_1,\Sigma}^{(K_\infty)}) - r_1)\\
&=\sum_{i=0}^{l-1} (\rg_{\F_q[[G]]}(\pi^iY_{A_2,\Sigma}^{(K_\infty)}/\pi^{i+1}Y_{A_2,\Sigma}^{(K_\infty)}) - r_1)\\
&\le \sum_{i=0}^{\infty} (\rg_{\F_q[[G]]}(\pi^iY_{A_2,\Sigma}^{(K_\infty)}/\pi^{i+1}Y_{A_2,\Sigma}^{(K_\infty)}) - r_2) 
= \mu(Y_{A_2,\Sigma}^{(K_\infty)}). \end{align*} 
Here we used the hypothesis $r_2 \le r_1$, i.e. $-r_1 \le -r_2$. 

Now we prove assertion (b). In the following, we abbreviate $\rg_{\F_q[[G]]}$ to $r$. If $A_1[\pi^{l+1}]\cong A_2[\pi^{l+1}]$ then \begin{align*}
r(\pi^{l} Y_{A_1,\Sigma}^{(K_{\infty})}/\pi^{l+1}Y_{A_1,\Sigma}^{(K_{\infty})}) & = r(\varprojlim_{K \subseteq L \subseteq K_\infty} \textup{Sel}_{0,A_1[\pi^{l+1}],\Sigma}(L)^\vee / \varprojlim_{K \subseteq L \subseteq K_\infty}\textup{Sel}_{0,A_1[\pi^{l}],\Sigma}(L)^\vee)\\
& = r(\varprojlim_{K \subseteq L \subseteq K_\infty}\textup{Sel}_{0,A_2[\pi^{l+1}],\Sigma}(L)^\vee / \varprojlim_{K \subseteq L \subseteq K_\infty} \textup{Sel}_{0,A_2[\pi^{l}],\Sigma}(L)^\vee)\\
& = r(\pi^{l}Y_{A_2,\Sigma}^{(K_{\infty})}/\pi^{l+1}Y_{A_2,\Sigma}^{(K_{\infty})}).\end{align*}
Using \cite[Proposition~4.12]{lim_fuer_howson} and the definition of $l$, we obtain 
\begin{eqnarray*} 
  r(\pi^l Y_{A_1,\Sigma}^{(K_\infty)}/\pi^{l+1}Y_{A_1,\Sigma}^{(K_\infty)}) & = & r((\pi^lY_{A_1,\Sigma}^{(K_\infty)})[\pi]) + \rg_{\Z_q[[G]]}(\pi^l Y_{A_1,\Sigma}^{(K_\infty)}) \\ 
  & = & 0 + r_1, 
\end{eqnarray*} 
and similarly 
$$ r(\pi^l Y_{A_2,\Sigma}^{(K_\infty)}/\pi^{l+1}Y_{A_2,\Sigma}^{(K_\infty)}) \ge r_2. $$ 
This proves the first claim of (b). 

If $r_2 = r_1$, then it follows from the above that ${\rg_{\F_q[[G]]}((\pi^lY_{A_2,\Sigma}^{(K_\infty)})[\pi]) = 0}$. In view of Remark \ref{finitesum} this implies that  
$$\mu(Y_{A_2,\Sigma}^{(K_\infty)}) = \sum_{i=0}^{l-1} (\rg_{\F_q[[G]]}(\pi^i Y_{A_2,\Sigma}^{(K_\infty)}/\pi^{i+1}Y_{A_2,\Sigma}^{(K_\infty)}) - r_2) = \mu(Y_{A_1,\Sigma}^{(K_\infty)}), $$ proving the second claim of (b).  
Using the equality of ranks derived above we obtain that 
\[\rg_{\F_q[[G]]}(\pi^iY_{A_1,\Sigma}^{(K_{\infty})}/\pi^{i+1}Y_{A_1,\Sigma}^{(K_{\infty})}) = \rg_{\F_q[[G]]}(\pi^iY_{A_2,\Sigma}^{(K_{\infty})}/\pi^{i+1}Y_{A_2,\Sigma}^{(K_{\infty})})\] 
for all $0\le i\le l$. Let $E_j = \bigoplus_{i=1}^{s_j} \mathcal{O}[[G]]/(\pi^{e_i^j})$ be the elementary $\mathcal{O}[[G]]$-module associated to $Y_{A_j,\Sigma}^{(K_{\infty})}[\pi^{\infty}]$ via \cite[Theorem~3.40]{venjakob} (Venjakob's result is proven only for $\Ok = \Z_p$, but it is valid in our more general setting) and define  $$f_i^{j}=  \vert \{k\mid e_k^j>i\}\vert.$$ Since $r_1=r_2 =:r$ we obtain
\[\rg_{\F_q[[G]]}(\pi^iY_{A_j,\Sigma}^{(K_{\infty})}/\pi^{i+1}Y_{A_j,\Sigma}^{(K_{\infty})}) = r+f^j_i \] 
by Lemma~\ref{lemma:mu}. 
Therefore, $f^1_i=f^2_i$ for all $0\le i\le l$ and thus $E_1=E_2$ from which the claim is immediate.

The assertion (c) is a special case of (b). Finally, if $(\pi^l Y_{A_2,\Sigma}^{(K_\infty)})[\pi]$ is also pseudo-null, then we can exchange the roles of $A_1$ and $A_2$ and obtain equality of $\mu$-invariants. 
\end{proof} 

\subsection{The completely split case}
\label{sec:split}
Now we treat admissible $p$-adic Lie extensions $K_\infty/K$ such that some $v \in \Sigma$ may be completely split in $K_\infty/K$. In this case, we work under the restrictive assumption that $A_j(K_v)[\pi] = \{0\}$ for every $v \in \Sigma$ and $j\in \{1,2\}$ (meaning that the subgroup of $A_j[\pi]$ fixed by $G_{K_v} \subseteq G_K$ is trivial). First, we derive several auxiliary results, starting with a lemma which will serve as a substitute for Lemma \ref{lemma:vergleich}. 
\begin{lemma} 
\label{lemma:isomorphisselmergroups} 
  Let $A$ be associated with a $p$-adic $G_K$-representation $V$ and let $\Sigma$ be a finite set of primes of $K$ containing $S_p \cup S_{\textup{ram}}(A)$. If $p = 2$, then we assume that $K$ is totally imaginary. Assume that $A(K_v)[p] = \{0\}$ for every $v \in \Sigma$. 
  Then
  \[\Sel_{0,A,\Sigma}(L)[\pi^i]\cong \Sel_{0,A[\pi^i],\Sigma}(L) \] 
  for every finite normal $p$-extension $L \subseteq K_\Sigma$ of $K$ and each $i \in \N$, $i \ge 1$. 
  \end{lemma}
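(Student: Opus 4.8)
The plan is to revisit the commutative diagram from the proof of Lemma~\ref{lemma:vergleich}, but now exploit the hypothesis $A(K_v)[p] = \{0\}$ to force the relevant kernels and cokernels to vanish entirely (rather than merely bounding their orders). Concretely, I would again consider the short exact sequence
\[ 0 \longrightarrow A[\pi^i] \longrightarrow A \stackrel{\cdot\pi^i}{\longrightarrow} A \longrightarrow 0 \]
of $\Gal(K_\Sigma/L)$-modules and take cohomology, obtaining the three vertical maps $s$, $h$, $g$ fitting into the commutative diagram with exact rows whose top row computes $\Sel_{0,A[\pi^i],\Sigma}(L)$ and whose bottom row computes $\Sel_{0,A,\Sigma}(L)[\pi^i]$. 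As in Lemma~\ref{lemma:vergleich}, $h$ and $g$ are surjective, $\ker(h) \cong A(L)/\pi^i A(L)$, and $\ker(g) \cong \bigoplus_{v \in \Sigma(L)} A(L_v)/\pi^i A(L_v)$. By the snake lemma, $\ker(s)$ embeds into $\ker(h)$ and $\coker(s)$ is controlled by $\ker(g)$, so it suffices to show both $A(L)/\pi^i A(L)$ and each $A(L_v)/\pi^i A(L_v)$ vanish, i.e. that $A(L) = A(L_v) = \{0\}$.

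The key input is that $A$ is $\pi$-divisible as an $\Ok$-module, so for any field $M$ the group $A(M)$ is a cofinitely generated $\Ok$-module; it is nonzero if and only if $A(M)[\pi] \ne \{0\}$. Thus I must prove $A(L_v)[\pi] = \{0\}$ for every prime $v$ of $L$ (which covers the global statement $A(L)[\pi] = \{0\}$ as well, since $A(L) \subseteq A(L_v)$ for any place $v$ of $L$). Here I use that $L/K$ is a $p$-extension: the group $A[\pi]$ is an $\F_q$-vector space, hence a finite $p$-group, on which $\Gal(L_w/K_v)$ acts (for $w \mid v$ a place of $L$ above a place $v$ of $K$); since $\Gal(L_w/K_v)$ is a $p$-group, if $A(L_w)[\pi] = A[\pi]^{G_{L_w}}$ were nonzero then the fixed points $\bigl(A(L_w)[\pi]\bigr)^{G_{L_w/K_v}} = A[\pi]^{G_{K_v}} = A(K_v)[\pi]$ would also be nonzero, contradicting the hypothesis. (This orbit-counting argument for $p$-groups acting on nonzero $p$-groups is the standard one; it is exactly the mechanism that requires $L/K$ to be a $p$-extension, and normality of $L/K$ ensures $L_w/K_v$ is Galois.) Hence $A(L_w) = \{0\}$ for all $w$, so $\ker(h) = \ker(g) = \{0\}$, whence $\ker(s) = \coker(s) = \{0\}$ and $s$ is an isomorphism.

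The main obstacle, such as it is, is the local-to-global passage: I need $A(L)[\pi] = \{0\}$, and the cleanest route is to note that the global fixed module injects into any local one, so it is enough to kill $A(L_w)[\pi]$ locally, which in turn reduces via the $p$-group fixed-point argument to the hypothesis $A(K_v)[\pi] = \{0\}$ at the place $v$ of $K$ below $w$. One should be slightly careful that $A(K_v)[p] = \{0\}$ and $A(K_v)[\pi] = \{0\}$ are equivalent here (the $\pi$-torsion is contained in the $p$-torsion since $\pi \mid p$ in $\Ok$, and conversely $A(K_v)[p]$ being trivial forces $A(K_v)[\pi]$ trivial), and that $A[\pi^i]$ is indeed unramified outside $\Sigma$ so that the $\Gal(K_\Sigma/L)$-cohomology makes sense — both points are already implicit in the setup of Lemma~\ref{lemma:vergleich}. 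With $A(L) = \{0\}$ and $A(L_v) = \{0\}$ in hand, the exact sequence
\[ 0 \longrightarrow \ker(s) \longrightarrow \Sel_{0,A[\pi^i],\Sigma}(L) \longrightarrow \Sel_{0,A,\Sigma}(L)[\pi^i] \longrightarrow \coker(s) \longrightarrow 0 \]
collapses to the desired isomorphism, completing the proof.
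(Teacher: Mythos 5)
Your proof is correct and follows essentially the same route as the paper's: both reduce the claim to the vanishing of $H^0(L,A)$ and $H^0(L_w,A)$, which is deduced from the $p$-group fixed-point/orbit-counting argument (you spell it out directly; the paper cites Corollary~(1.6.13) of Neukirch–Schmidt–Wingberg, which is the same fact), and then both exploit the long exact cohomology sequence of $0 \to A[\pi^i] \to A \to A \to 0$. Your framing via the commutative diagram of Lemma~\ref{lemma:vergleich} and the snake lemma is a cosmetic repackaging of the paper's direct derivation of the isomorphisms $H^1(K_\Sigma/L, A[\pi^i]) \cong H^1(K_\Sigma/L,A)[\pi^i]$ and $H^1(L_w,A[\pi^i]) \cong H^1(L_w,A)[\pi^i]$.
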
 
\begin{proof}
The assumptions imply that $A(K)[\pi]=\{ 0 \}$. Since $L/K$ is a $p$-extension, it follows from \cite[Corollary~(1.6.13)]{nsw} that also $H^0(L, A[\pi]) = A(L)[\pi]=\{0\}$. Hence $H^0(L,A)= \{ 0 \}$. As $V$ is unramified outside of $\Sigma$ 
we obtain that $H^0(K_{\Sigma}/L, A)=0$.
Now consider the exact sequence 
\[0\longrightarrow A[\pi^i]\longrightarrow A\stackrel{ \cdot \pi^i}{\longrightarrow} A\longrightarrow 0.\]
Taking $K_{\Sigma}/L$-cohomology we obtain a second exact sequence
\[0\longrightarrow H^1(K_{\Sigma}/L,A[\pi^i])\longrightarrow H^1(K_{\Sigma}/L, A)\longrightarrow H^1(K_{\Sigma}/L,A), \]
where the last homomorphism is multiplication by $\pi^i$. Hence, we obtain the isomorphism
\[H^1(K_{\Sigma}/L,A[\pi^i])\cong H^1(K_{\Sigma}/L,A)[\pi^i].\]
Let now $w$ be a place in $L$ above a prime $v \in \Sigma$.
Using analogous arguments we can derive from the hypothesis $A(K_v)[\pi] = \{0\}$ that
\[H^1(L_{w},A[\pi^i])\cong H^1(L_{w},A)[\pi^i].\] 
\end{proof}
\begin{cor} \label{cor:isomorphisselmergroups} 
   Let $A$ be as above, let $K_\infty/K$ be an admissible $p$-adic Lie extension, and let $\Sigma$ be a finite set of primes of $K$ containing ${S_{\textup{ram}}(K_\infty/K) \cup S_p \cup  S_{\textup{ram}}(A)}$. If $p = 2$, then we assume that $K$ is totally imaginary. Assume that $A(K_v)[\pi] = \{0\}$ for every $v \in \Sigma$. 
  Then
  \[ Y_{A,\Sigma}^{(K_\infty)} / \pi^i Y_{A,\Sigma}^{(K_\infty)} \cong \varprojlim_{K \subseteq L \subseteq K_\infty}  \Sel_{0,A[\pi^i],\Sigma}(L)^\vee \] 
  for every $i \in \N$, $i \ge 1$, where the projective limit is taken over the finite normal subextensions $L$ of $K_\infty/K$. 
\end{cor}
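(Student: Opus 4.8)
The goal is to upgrade the isomorphism of Lemma~\ref{lemma:isomorphisselmergroups} (valid for each finite normal $p$-extension $L/K$ inside $K_\Sigma$) to a statement about the projective limits over the tower $K_\infty/K$, and to replace $\Sel_{0,A,\Sigma}(L)[\pi^i]$ on the dual side by $Y_{A,\Sigma}^{(K_\infty)}/\pi^i Y_{A,\Sigma}^{(K_\infty)}$. I would proceed in two steps: first dualize the isomorphism of Lemma~\ref{lemma:isomorphisselmergroups} at each finite level $L$, then pass to the inverse limit, and second identify $\varprojlim_L (\Sel_{0,A,\Sigma}(L)[\pi^i])^\vee$ with $Y_{A,\Sigma}^{(K_\infty)}/\pi^i Y_{A,\Sigma}^{(K_\infty)}$.

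\textbf{Step 1.} For each finite normal subextension $K\subseteq L\subseteq K_\infty$, Lemma~\ref{lemma:isomorphisselmergroups} gives a $\Gal(L/K)$-equivariant isomorphism $\Sel_{0,A,\Sigma}(L)[\pi^i]\cong \Sel_{0,A[\pi^i],\Sigma}(L)$; here one should check (from the naturality of the construction in that lemma, which is built from the Kummer sequence $0\to A[\pi^i]\to A\xrightarrow{\pi^i} A\to 0$ and the corresponding local sequences) that these isomorphisms are compatible with the corestriction maps along the tower. Taking Pontryagin duals is exact, so we get compatible isomorphisms $\Sel_{0,A[\pi^i],\Sigma}(L)^\vee\cong (\Sel_{0,A,\Sigma}(L)[\pi^i])^\vee$. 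Passing to $\varprojlim_L$ yields
\[ \varprojlim_{K\subseteq L\subseteq K_\infty} \Sel_{0,A[\pi^i],\Sigma}(L)^\vee \;\cong\; \varprojlim_{K\subseteq L\subseteq K_\infty} (\Sel_{0,A,\Sigma}(L)[\pi^i])^\vee. \]

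\textbf{Step 2.} It remains to identify the right-hand side with $Y_{A,\Sigma}^{(K_\infty)}/\pi^i Y_{A,\Sigma}^{(K_\infty)}$. For each $L$, dualizing the exact sequence $0\to \Sel_{0,A,\Sigma}(L)[\pi^i]\to \Sel_{0,A,\Sigma}(L)\xrightarrow{\pi^i}\Sel_{0,A,\Sigma}(L)$ gives $Y_{A,\Sigma}^{(L)}/\pi^i Y_{A,\Sigma}^{(L)}\cong (\Sel_{0,A,\Sigma}(L)[\pi^i])^\vee$ (using that $\pi^i$ on the discrete module dualizes to $\pi^i$ on the compact dual, and that $\coker(\pi^i)^\vee = (\ker \text{ on the dual})$, while the cokernel of $\pi^i$ on $\Sel_{0,A,\Sigma}(L)$ vanishes because $A$ is $\pi$-divisible, hence $\Sel_{0,A,\Sigma}(L)$ is $\pi$-divisible by the snake lemma applied to the Kummer sequence together with $H^0(K_\Sigma/L,A)=0$). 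Passing to the inverse limit over $L$ and invoking the fact that inverse limits of the $\pi^i$-quotients of the modules $Y_{A,\Sigma}^{(L)}$ compute $Y_{A,\Sigma}^{(K_\infty)}/\pi^i Y_{A,\Sigma}^{(K_\infty)}$ — which holds because $\pi^i$ is a topological generator situation: $Y_{A,\Sigma}^{(K_\infty)}=\varprojlim_L Y_{A,\Sigma}^{(L)}$ is a finitely generated $\Ok[[G]]$-module, and for such modules $\varprojlim_L (Y_{A,\Sigma}^{(L)}/\pi^i Y_{A,\Sigma}^{(L)}) = Y_{A,\Sigma}^{(K_\infty)}/\pi^i Y_{A,\Sigma}^{(K_\infty)}$, the relevant $\varprojlim^1$-terms vanishing by the Mittag-Leffler condition since the transition maps are surjective — completes the identification.

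\textbf{Main obstacle.} The delicate point is the exchange of $\varprojlim_L$ with the quotient by $\pi^i$ and with taking $\pi^i$-torsion: one must ensure the relevant inverse systems satisfy Mittag-Leffler (surjective transition maps) so that no $\varprojlim^1$ obstruction appears, and one must verify compatibility of the level-$L$ isomorphisms from Lemma~\ref{lemma:isomorphisselmergroups} with the corestriction maps defining $Y_{A,\Sigma}^{(K_\infty)}$. Both are routine once set up carefully, using that each $\Sel_{0,A,\Sigma}(L)$ is $\pi$-divisible (so its dual is $\pi$-torsion-free at the relevant stage) and that the constructions involved are functorial in $L$; the essential input — the level-wise isomorphism — is already provided by Lemma~\ref{lemma:isomorphisselmergroups}, so no new arithmetic is needed.
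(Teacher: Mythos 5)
Your proof takes the same route as the paper: apply Lemma~\ref{lemma:isomorphisselmergroups} at each finite level, dualize to get $Y_{A,\Sigma}^{(L)}/\pi^i Y_{A,\Sigma}^{(L)}\cong(\Sel_{0,A,\Sigma}(L)[\pi^i])^\vee\cong\Sel_{0,A[\pi^i],\Sigma}(L)^\vee$, and pass to the inverse limit. One small aside: the identification $(M[\pi^i])^\vee\cong M^\vee/\pi^i M^\vee$ for a discrete $\Ok$-module $M$ requires no $\pi$-divisibility of $M$ (and $\Sel_{0,A,\Sigma}(L)$ need not be $\pi$-divisible --- the cokernel of $\pi^i$ on $H^1(K_\Sigma/L,A)$ lands in $H^2(K_\Sigma/L,A[\pi^i])$, which can be nonzero), so that parenthetical is an unnecessary and, as stated, unjustified detour; dualizing the left-exact sequence $0\to M[\pi^i]\to M\xrightarrow{\pi^i}M$ already yields the needed right-exact sequence.
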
 
\begin{proof} 
   In view of Lemma~\ref{lemma:isomorphisselmergroups}, we have isomorphisms 
   \[ \Sel_{0,A,\Sigma}(L)[\pi^i]\cong \Sel_{0,A[\pi^i],\Sigma}(L)\] 
   for each $L$. By duality $\pi^{i}Y_{A,\Sigma}^{(L)}$ is precisely the group acting trivial on $\Sel_{0,A,\Sigma}(L)[\pi^i]$. Therefore 
   \[ Y_{A,\Sigma}^{(L)} / \pi^i Y_{A,\Sigma}^{(L)} \cong (\Sel_{0,A,\Sigma}(L)[\pi^i])^\vee \cong \Sel_{0,A[\pi^i],\Sigma}(L)^\vee. \] The result now follows since 
   \[ Y_{A,\Sigma}^{(K_\infty)} / \pi^i Y_{A,\Sigma}^{(K_\infty)} \cong \varprojlim_{K \subseteq L \subseteq K_\infty} Y_{A,\Sigma}^{(L)} / \pi^i Y_{A,\Sigma}^{(L)}. \] 
\end{proof} 

We can now prove an analogon of Theorem~\ref{thm:A}: 
\begin{thm} \label{thm:A2} 
   Let $A_1$ and $A_2$ be associated with two representations $V_1$ and $V_2$ of $G_K$. Let $K_\infty/K$ be an admissible $p$-adic Lie extension, and let $G = \Gal(K_\infty/K)$. Let $\Sigma$ be a finite set of primes of $K$ which contains $S_{\text{ram}}(K_\infty/K)$, $S_p$ and the sets of primes of $K$ where either $V_1$ or $V_2$ is ramified. If $p = 2$, then we assume that $K$ is totally imaginary. 
   
   Suppose that $A_j(K_v)[\pi] = \{0\}$ for every $v \in \Sigma$ and $j\in \{1,2\}$. We let $r_j = \rg_{\mathcal{O}[[G]]}(Y_{A_j,\Sigma}^{(K_\infty)})$, $1 \le j \le 2$. Let $l$ be minimal such that $(\pi^l Y_{A_1,\Sigma}^{(K_\infty)})[\pi]$ is pseudo-null. 
   
   Then the statements from Theorem~\ref{thm:A} hold. 
\end{thm}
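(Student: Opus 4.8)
The plan is to mirror the proof of Theorem~\ref{thm:A} essentially verbatim, replacing each ingredient from the strongly $\Sigma$-admissible setting by its counterpart from the completely split setting. The only places where strong $\Sigma$-admissibility entered the proof of Theorem~\ref{thm:A} were through Corollary~\ref{cor:vergleich} (which relates $\rg_{\F_q[[G]]}(\pi^i Y_{A,\Sigma}^{(K_\infty)}/\pi^{i+1} Y_{A,\Sigma}^{(K_\infty)})$ to the ranks of the projective limits of $\pi^i$-fine Selmer group duals, up to modules killed by a non-constant element of $\Ok[[G]]$) and through the independence of $\Sel_{0,A,\Sigma}$ of the choice of $\Sigma$; everything else was formal. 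In the present situation Corollary~\ref{cor:isomorphisselmergroups} gives the much cleaner statement that $Y_{A_j,\Sigma}^{(K_\infty)}/\pi^i Y_{A_j,\Sigma}^{(K_\infty)} \cong \varprojlim_L \Sel_{0,A_j[\pi^i],\Sigma}(L)^\vee$ outright — an honest isomorphism rather than an isomorphism up to pseudo-null or up to torsion killed by a non-constant element. So in fact the argument becomes \emph{simpler}, and the same chain of equalities and inequalities of $\F_q[[G]]$-ranks goes through with Corollary~\ref{cor:vergleich} replaced by Corollary~\ref{cor:isomorphisselmergroups}.

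Concretely, first I would record that since $A_j(K_v)[\pi]=\{0\}$ for all $v\in\Sigma$ and $j$, Corollary~\ref{cor:isomorphisselmergroups} applies to both $A_1$ and $A_2$. Next, as in the proof of Theorem~\ref{thm:A}, I would write $\mu(Y_{A_1,\Sigma}^{(K_\infty)}) = \sum_{i=0}^{l-1}\big(\rg_{\F_q[[G]]}(\pi^i Y_{A_1,\Sigma}^{(K_\infty)}/\pi^{i+1}Y_{A_1,\Sigma}^{(K_\infty)}) - r_1\big)$ using Lemma~\ref{lemma:mu} and the definition of $l$. Then, combining Corollary~\ref{cor:isomorphisselmergroups} with Lemma~\ref{lemma:G-iso} (noting that in the projective limit only the finite normal $p$-subextensions $L$ of $K_\infty/K$ occur, which is exactly the class of fields for which Lemma~\ref{lemma:isomorphisselmergroups} and Lemma~\ref{lemma:G-iso} are stated, since $G$ is pro-$p$), the hypothesis $A_1[\pi^l]\cong A_2[\pi^l]$ yields $\Sel_{0,A_1[\pi^i],\Sigma}(L)\cong \Sel_{0,A_2[\pi^i],\Sigma}(L)$ for all $i\le l$ and all such $L$, hence $\rg_{\F_q[[G]]}(\pi^i Y_{A_1,\Sigma}^{(K_\infty)}/\pi^{i+1}Y_{A_1,\Sigma}^{(K_\infty)}) = \rg_{\F_q[[G]]}(\pi^i Y_{A_2,\Sigma}^{(K_\infty)}/\pi^{i+1}Y_{A_2,\Sigma}^{(K_\infty)})$ for $0\le i\le l-1$, and in fact for $i=l$ as well once $A_1[\pi^{l+1}]\cong A_2[\pi^{l+1}]$. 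From here the four assertions follow exactly as in the proof of Theorem~\ref{thm:A}: part~(a) from $-r_1\le -r_2$; part~(b) from the identity $\rg_{\F_q[[G]]}(\pi^l Y_{A_j,\Sigma}^{(K_\infty)}/\pi^{l+1}Y_{A_j,\Sigma}^{(K_\infty)}) = \rg_{\F_q[[G]]}((\pi^l Y_{A_j,\Sigma}^{(K_\infty)})[\pi]) + r_j$ of \cite[Proposition~4.12]{lim_fuer_howson}, which forces $\rg_{\F_q[[G]]}((\pi^l Y_{A_2,\Sigma}^{(K_\infty)})[\pi])=0$ when $r_1=r_2$, together with Remark~\ref{finitesum} and the elementary-module comparison via \cite[Theorem~3.40]{venjakob}; part~(c) as the special case $l=0$ (when $A_1[\pi]\cong A_2[\pi]$ one may take $l$ as small as $0$ only if $(\pi^0 Y_{A_1,\Sigma}^{(K_\infty)})[\pi]=Y_{A_1,\Sigma}^{(K_\infty)}[\pi]$ is pseudo-null, so more precisely it is the instance of~(b) with the relevant $l$); and part~(d) by symmetry in $A_1$ and $A_2$.

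I do not anticipate a genuine obstacle here — the work was already done in Section~\ref{sec:split} in proving Lemma~\ref{lemma:isomorphisselmergroups} and Corollary~\ref{cor:isomorphisselmergroups}, which are the analogues of Lemma~\ref{lemma:vergleich} and Corollary~\ref{cor:vergleich}. The one point requiring a word of care is that the projective limit defining $Y_{A,\Sigma}^{(K_\infty)}/\pi^i Y_{A,\Sigma}^{(K_\infty)}$ in Corollary~\ref{cor:isomorphisselmergroups} is taken over finite \emph{normal} subextensions $L$, and that Lemma~\ref{lemma:isomorphisselmergroups} and Lemma~\ref{lemma:G-iso} require $L/K$ to be a $p$-extension; but this is automatic because $G=\Gal(K_\infty/K)$ is pro-$p$, so every finite subextension of $K_\infty/K$ is a $p$-extension, and the normal ones are cofinal. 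A second minor point is that one should check the passage to the projective limit commutes with forming the elementary $\Ok[[G]]$-module and that \cite[Theorem~3.40]{venjakob} applies to $Y_{A_j,\Sigma}^{(K_\infty)}[\pi^\infty]$, but this is exactly as in the proof of Theorem~\ref{thm:A} and needs no new input. Hence the proof reduces to stating ``replace Corollary~\ref{cor:vergleich} by Corollary~\ref{cor:isomorphisselmergroups} in the proof of Theorem~\ref{thm:A}'' and copying the rest.
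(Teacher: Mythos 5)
Your proposal is correct and takes exactly the same route as the paper's proof: replace Corollary~\ref{cor:vergleich} by Corollary~\ref{cor:isomorphisselmergroups}, apply Lemma~\ref{lemma:G-iso}, conclude that $\pi^i Y_{A_1,\Sigma}^{(K_\infty)}/\pi^{i+1}Y_{A_1,\Sigma}^{(K_\infty)}\cong\pi^i Y_{A_2,\Sigma}^{(K_\infty)}/\pi^{i+1}Y_{A_2,\Sigma}^{(K_\infty)}$ for $i<l$, and then proceed as in the proof of Theorem~\ref{thm:A}. The technical point you raise about finite normal subextensions being $p$-extensions (and hence in scope for Lemma~\ref{lemma:isomorphisselmergroups}) is indeed the only thing needing a word of care, and your observation that it is automatic because $G$ is pro-$p$ is correct.
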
 
\begin{proof} 
  Suppose that $A_1[\pi^l] \cong A_2[\pi^l]$ as $G_K$-modules. Then  
  \begin{eqnarray*} \pi^{i} Y_{A_1,\Sigma}^{(K_{\infty})}/\pi^{i+1}Y_{A_1,\Sigma}^{(K_{\infty})} & \cong & \varprojlim_{K \subseteq L \subseteq K_\infty} \textup{Sel}_{0,A_1[\pi^{i+1}],\Sigma}(L)^\vee / \varprojlim_{K \subseteq L \subseteq K_\infty}\textup{Sel}_{0,A_1[\pi^{i}],\Sigma}(L)^\vee \\ 
  & \cong & \varprojlim_{K \subseteq L \subseteq K_\infty} \textup{Sel}_{0,A_2[\pi^{i+1}],\Sigma}(L)^\vee / \varprojlim_{K \subseteq L \subseteq K_\infty}\textup{Sel}_{0,A_2[\pi^{i}],\Sigma}(L)^\vee \\ 
  & \cong & \pi^{i} Y_{A_2,\Sigma}^{(K_{\infty})}/\pi^{i+1}Y_{A_2,\Sigma}^{(K_{\infty})}
  \end{eqnarray*} 
  for every $i < l$, by Corollary~\ref{cor:isomorphisselmergroups} and Lemma~\ref{lemma:G-iso}. It follows in particular that both $\F_q[[G]]$-modules have the same rank. Therefore we can proceed as in the proof of Theorem~\ref{thm:A}. 
\end{proof}

\begin{proof}[Proof of Theorem~\ref{thm:B}] 
The hypothesis in (a) implies that $\pi^l Y_{A_1,\Sigma}^{(K_\infty)}$ is $\mathcal{O}$-free. Let $E$ be an elementary $\Lambda$-module pseudo-isomorphic to $\pi^l Y_{A_1,\Sigma}^{(K_\infty)}$. Since $\pi^l Y_{A_1,\Sigma}^{(K_\infty)}$ is $\mathcal{O}$-free, the maximal finite submodule of $Y_{A_1,\Sigma}^{(K_\infty)}$ is annihilated by $\pi^l$. For any finitely generated $\mathcal{O}$-module $M$ we denote by $\rg_q(M)$ the dimension of $M/\pi M$ as $\F_q$-vector space. Then ${\rg_q(\pi^l Y_{A_1, \Sigma}^{(K_\infty)}) = \rg_q(E)}$ by the argument given in the proof of \cite[Proposition~3.4(i)]{local_beh}: as $\pi^l Y_{A_1,\Sigma}^{(K_\infty)}$ is $\Ok$-free, we have an injection ${\varphi \colon \pi^l Y_{A_1,\Sigma}^{(K_\infty)} \longrightarrow E}$ with finite cokernel. Moreover, since multiplication by $\pi$ is injective on $E$, the quotients $E/\textup{im}(\varphi)$ and $\pi E / \pi \textup{im}(\varphi)$ are isomorphic, proving that $$\rg_q(\pi^l Y_{A_1, \Sigma}^{(K_\infty)}) = \rg_q(\textup{im}(\varphi))$$ 
equals $\rg_q(E)$. 

Therefore  
\[ |\pi^{l} Y_{A_1,\Sigma}^{(K_{\infty})}/\pi^{l+1}Y_{A_1,\Sigma}^{(K_{\infty)}}| = q^{\rg_q(\pi^lY_{A_1,\Sigma}^{(K_\infty)})} = q^{\rg_q(E)} = q^{\lambda(E)} = q^{\lambda(Y_{A_1,\Sigma}^{(K_\infty)})}. \]
On the other hand, the maximal finite $\Lambda$-submodule of $Y_{A_2,\Sigma}^{(K_\infty)}$ need not be annihilated by $\pi^l$; therefore 
\[ |\pi^{l} Y_{A_2,\Sigma}^{(K_{\infty})}/\pi^{l+1}Y_{A_2, \Sigma},\Sigma^{(K_{\infty)}}| \ge q^{\lambda( Y_{A_2,\Sigma}^{(K_\infty)})}. \]

If both $\pi^l Y_{A_1,\Sigma}^{(K_\infty)}$ and $\pi^l Y_{A_2,\Sigma}^{(K_\infty)}$ are $\mathcal{O}$-free, then we can exchange the roles of $A_1$ and $A_2$ and obtain equality of $\lambda$-invariants. 
\end{proof}

\section{Non $p$-extensions} \label{section:q} 
In this final section we study the growth of fine Selmer groups of congruent Galois representations over abelian algebraic extensions of $K$ which are the compositum of finite $r$-extensions for suitable primes $r \ne p$. If $p = 2$, then we always assume that $K$ is totally imaginary. Similarly as in Sections~\ref{sec:generic} and \ref{sec:split}, we distinguish between two different settings, starting with one resembling the case which has been studied in Section~\ref{sec:split}. 
\begin{thm} \label{thm:q} 
  Let $p$ be a fixed prime, let $A_1$ and $A_2$ be associated with two representations of $G_K$, and let $K_\infty/K$ be an abelian algebraic extension. Let $\mathcal{P}$ be the set of primes $r$ such that $K_\infty/K$ contains a finite subextension of degree $r$ over $K$. 
  Let $\Sigma$ be a finite set of primes of $K$ which contains $S_p$, $S_{ram}(K_\infty/K)$ and $S_{\textup{ram}}(A_j)$, $j = 1,2$. 
  
  We assume that ${\dim(A_1) = \dim(A_2) =: d}$, that $r \ge q^{d}$ for each $r \in \mathcal{P}$, and that ${A_j(K_v)[\pi] = \{0\}}$ for $j = 1,2$ and every $v \in \Sigma$. 
  Then the following statements hold: \begin{compactenum}[(a)] 
  \item If $A_1[\pi] \cong A_2[\pi]$ as $G_K$-modules, then $Y_{A_1, \Sigma}^{(K_\infty)}$ is a finitely generated $\Ok$-module if and only if $Y_{A_2,\Sigma}^{(K_\infty)}$ is finitely generated over $\Ok$.
  \item Suppose that both $Y_{A_j,\Sigma}^{(K_\infty)}$, $j = 1,2$, are finitely generated over $\Ok$. Let $l \in \N$ be large enough such that $(\pi^l Y_{A_1, \Sigma}^{(K_\infty)})[\pi] = \{0\}$. If $A_1[\pi^{l+1}] \cong A_2[\pi^{l+1}]$ as $G_K$-modules, then $\rg_{\Ok}(Y_{A_2,\Sigma}^{(K_\infty)}) \le \rg_{\Ok}(Y_{A_1,\Sigma}^{(K_\infty)})$. 
  \item In the setting of (b), suppose that $A_1[\pi^{l+1}] \cong A_2[\pi^{l+1}]$ for some $l$ such that both $(\pi^l Y_{A_j,\Sigma}^{(K_\infty)})[\pi]$ are trivial. Then $\rg_{\Ok}(Y_{A_2,\Sigma}^{(K_\infty)}) =  \rg_{\Ok}(Y_{A_1,\Sigma}^{(K_\infty)})$. 
  \end{compactenum} 
\end{thm}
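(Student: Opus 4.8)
The plan is to reduce Theorem~\ref{thm:q} to a non-$p$-extension analogue of the machinery developed in Section~\ref{sec:split}, replacing the $\Ok[[G]]$-rank by the $\Ok$-rank and the Iwasawa $\mu$/$\lambda$-invariants by the $\F_q$-dimensions $\rg_q(\pi^i Y/\pi^{i+1}Y)$. First I would establish the non-$p$ analogue of Corollary~\ref{cor:isomorphisselmergroups}: since $A_j(K_v)[\pi]=\{0\}$ for all $v\in\Sigma$, Lemma~\ref{lemma:isomorphisselmergroups} applies verbatim to every finite \emph{$p$-subextension} of $K_\infty/K$, but here the finite layers $L$ are products of $r$-extensions with $r\neq p$, so one needs the hypothesis $r\ge q^d$ to control the relevant $H^0$-terms. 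Concretely, for $v\in\Sigma$ and $L/K$ of degree a power of a prime $r\neq p$, the $G_{K_v}$-module $A_j[\pi]$ is an $\F_q$-vector space of dimension $\le d$; if $A_j(K_v)[\pi]=\{0\}$, then a Sylow argument together with the constraint that $\#\mathrm{GL}_d(\F_q)$ has no prime factor $r\ge q^d > (q^d-1)(q^d-q)\cdots$ (more simply: $r$ does not divide $|A_j[\pi]\setminus\{0\}|+1 = q^{\dim}$, and $r\ge q^d$ forces the orbit-counting argument to give $A_j(L_w)[\pi]=\{0\}$) shows that $A_j(L_w)[\pi]$ remains trivial; similarly $A_j(L)[\pi]=\{0\}$. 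Hence the exact sequence argument of Lemma~\ref{lemma:isomorphisselmergroups} yields $\Sel_{0,A_j,\Sigma}(L)[\pi^i]\cong\Sel_{0,A_j[\pi^i],\Sigma}(L)$ for every finite subextension $L$, and passing to the limit gives $Y_{A_j,\Sigma}^{(K_\infty)}/\pi^i Y_{A_j,\Sigma}^{(K_\infty)}\cong\varprojlim_L\Sel_{0,A_j[\pi^i],\Sigma}(L)^\vee$.

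Next I would invoke Lemma~\ref{lemma:G-iso}, which is insensitive to the type of extension and gives $\Sel_{0,A_1[\pi^i],\Sigma}(L)\cong\Sel_{0,A_2[\pi^i],\Sigma}(L)$ for all finite $L\subseteq K_\infty$ whenever $A_1[\pi^i]\cong A_2[\pi^i]$ as $G_K$-modules. Combining this with the previous paragraph, under the congruence $A_1[\pi^m]\cong A_2[\pi^m]$ one gets $\Ok$-module isomorphisms $Y_{A_1,\Sigma}^{(K_\infty)}/\pi^i Y_{A_1,\Sigma}^{(K_\infty)}\cong Y_{A_2,\Sigma}^{(K_\infty)}/\pi^i Y_{A_2,\Sigma}^{(K_\infty)}$ for every $i\le m$, and in particular the same $\F_q$-dimensions $\rg_q(\pi^i Y/\pi^{i+1}Y)$ for $i<m$. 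For part (a): taking $m=1$ shows $\rg_q(Y_{A_1,\Sigma}^{(K_\infty)}/\pi Y_{A_1,\Sigma}^{(K_\infty)})=\rg_q(Y_{A_2,\Sigma}^{(K_\infty)}/\pi Y_{A_2,\Sigma}^{(K_\infty)})$, and $Y_{A_j,\Sigma}^{(K_\infty)}$ is finitely generated over $\Ok$ if and only if $Y/\pi Y$ is finite (by the topological Nakayama lemma, since $Y$ is a compact $\Ok[[\mathrm{something}]]$-module, or directly since $Y$ is a profinite $\Ok$-module and $\Ok$ is a complete DVR), so the two finiteness conditions are equivalent.

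For parts (b) and (c) I would argue as in the proof of Theorem~\ref{thm:B}, transposed to the $\Ok$-rank: if $Y_{A_j,\Sigma}^{(K_\infty)}$ is finitely generated over $\Ok$ then it is pseudo-isomorphic (over $\Ok$) to $\Ok^{\rg_\Ok(Y)}\oplus(\text{finite})$, and choosing $l$ with $(\pi^l Y_{A_1,\Sigma}^{(K_\infty)})[\pi]=\{0\}$ forces $\pi^l Y_{A_1,\Sigma}^{(K_\infty)}$ to be $\Ok$-free; then, exactly as in the cited argument from \cite[Proposition~3.4(i)]{local_beh}, $\rg_q(\pi^l Y_{A_1,\Sigma}^{(K_\infty)})=\rg_\Ok(Y_{A_1,\Sigma}^{(K_\infty)})$, so from $A_1[\pi^{l+1}]\cong A_2[\pi^{l+1}]$ we get
\[ \rg_\Ok(Y_{A_1,\Sigma}^{(K_\infty)}) = \rg_q(\pi^l Y_{A_1,\Sigma}^{(K_\infty)}/\pi^{l+1}Y_{A_1,\Sigma}^{(K_\infty)}) = \rg_q(\pi^l Y_{A_2,\Sigma}^{(K_\infty)}/\pi^{l+1}Y_{A_2,\Sigma}^{(K_\infty)}) \ge \rg_\Ok(Y_{A_2,\Sigma}^{(K_\infty)}), \]
the last inequality because $\pi^l Y_{A_2,\Sigma}^{(K_\infty)}$ may still carry $\pi$-torsion. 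Part (c) follows by symmetry once both $\pi^l Y_{A_j,\Sigma}^{(K_\infty)}$ are $\Ok$-free. I expect the main obstacle to be the first paragraph: making the orbit/Sylow argument that propagates the vanishing of $A_j(K_v)[\pi]$ up to the finite $r$-layers genuinely rigorous, i.e. pinning down exactly why $r\ge q^d$ is the right bound (it guarantees $r\nmid |\mathrm{Aut}_{\F_q}(A_j[\pi])$-orbit sizes acting on $A_j[\pi]$ restricted to the nonzero vectors, equivalently $r$ cannot divide $q^e-1$ for $e\le d$ since $q^e-1<q^d\le r$, so a pro-$r$ group acting on $A_j[\pi]$ with no nonzero fixed vector over $K_v$ continues to have none over a degree-$r^k$ extension), and ensuring the limit of the isomorphisms is compatible with the $\Ok$-module structure and the corestriction maps defining $Y_{A_j,\Sigma}^{(K_\infty)}$.
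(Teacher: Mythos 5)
Your proposal follows essentially the same route as the paper: you identify the correct key step — a non-$p$ analogue of Lemma~\ref{lemma:isomorphisselmergroups} and Corollary~\ref{cor:isomorphisselmergroups} asserting $\Sel_{0,A_j,\Sigma}(L)[\pi^i]\cong\Sel_{0,A_j[\pi^i],\Sigma}(L)$ for the finite normal subextensions $L\subseteq K_\infty$, obtained by propagating $A_j(K_v)[\pi]=\{0\}$ to $A_j(L_w)[\pi]=\{0\}$ via an orbit-counting (equivalently Sylow) argument that uses $r\ge q^d$ — and then transpose the machinery of Theorems~\ref{thm:A2} and~\ref{thm:B} with $\rg_\Ok$ playing the role of the $\Ok[[G]]$-rank. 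This is precisely what the paper does (see Lemma~\ref{lemma:fuer_q}). One small remark: the paper's orbit argument is phrased for an arbitrary finite normal subextension $L$ (take $r$ to be the smallest prime dividing $[L:K]$, so every orbit size $>1$ is divisible by some $r'\ge r\ge q^d$), which directly handles the fact that such $L$ need not have prime-power degree; your write-up first restricts to $[L:K]$ a power of a single prime and only gestures at the general case, and the parenthetical remark "$r$ does not divide $|A_j[\pi]\setminus\{0\}|+1=q^{\dim}$" is a red herring — the correct point, which you do also state, is that $|A_j(L_w)[\pi]|\le q^d\le r$ forbids any nontrivial orbit, or equivalently $r\nmid q^e-1$ for $e\le d$. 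With those minor clean-ups your argument coincides with the paper's.
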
 
\begin{proof} 
The proof is analogous to the proofs of Theorems~\ref{thm:A2} and \ref{thm:B}. Instead of Lemma~\ref{lemma:isomorphisselmergroups}, we apply the following 
\begin{lemma} 
\label{lemma:fuer_q} 
  Let $A$ be associated with a $G_K$-representation of dimension $d$, let $\Sigma$ be a finite set of primes of $K$ containing ${S_p \cup S_{\textup{ram}}(A)}$. If $p = 2$, then we assume that $K$ is totally imaginary. Assume that $A(K_v)[\pi] = \{0\}$ for every $v \in \Sigma$. 
  
  Let $L \subseteq K_\Sigma$ be a finite normal extension of $K$ such that each prime number $r$ dividing $[L:K]$ satisfies $r \ge q^{d}$. Then 
  \[\Sel_{0,A,\Sigma}(L)[\pi^i]\cong \Sel_{0,A[\pi^i],\Sigma}(L) \] 
  for each $i \in \N$, $i \ge 1$. 
\end{lemma} 
\begin{proof}
By assumption $A(K)[\pi]=\{ 0 \}$. We mimic the proof of \cite[Corollary~(1.6.13)]{nsw} and show that also $H^0(L, A[\pi]) = A(L)[\pi]=\{0\}$. 
Let $r$ be the smallest prime number dividing $[L:K]$. Since $A(L)[\pi] \setminus A(K)[\pi]$ is the disjoint union of $\Gal(L/K)$-orbits with more than one element, the cardinality of each such orbit is divisible by some prime $r' \ge r$. Thus if there exists at least one orbit containing more than one element, then 
$$|A(L)[\pi]| \ge |A(K)[\pi]|+r'\ge |A(K)[\pi]|+r. $$
On the other hand, $|A(L)[\pi]| \le q^{d}$. Since $r \ge q^{d}$ by assumption, we obtain that such a non-trivial orbit cannot exist. Therefore $A(L)[\pi]=\{0\}$. 
Now we can proceed as in the proof of Lemma~\ref{lemma:isomorphisselmergroups}. 
\end{proof} 
Using this lemma, we can derive a variant of Corollary~\ref{cor:isomorphisselmergroups} for all finite normal subextensions of $K_\infty$. Then we can proceed as in the proofs of Theorems~\ref{thm:A} and \ref{thm:B}. 
\end{proof} 

Now we turn to the second result for non-$p$-extensions. As in Theorem~\ref{thm:q}, we let $\mathcal{P} = \mathcal{P}(K_\infty)$ be the set of prime numbers $r$ such that $K_\infty$ contains an extension of $K$ of degree $r$. 
\begin{thm} \label{thm:q2} 
  Let $p$ be a fixed prime, let $A_1$ and $A_2$ be associated with two $G_K$-representations, and let $K_\infty/K$ be an abelian algebraic extension such that ${p \not\in \mathcal{P}(K_\infty)}$. 
  Let $\Sigma$ be a finite set of primes of $K$ which contains $S_p$, $S_{ram}(K_\infty/K)$ and $S_{\textup{ram}}(A_j)$, $j = 1,2$. 
  
  We assume that each prime $v \in \Sigma$ is finitely split in $K_\infty/K$. 
  \begin{compactenum}[(a)] 
  \item If $A_1[\pi] \cong A_2[\pi]$ as $G_K$-modules, then $Y_{A_1,\Sigma}^{(K_\infty)}$ is a finitely generated $\Ok$-module if and only if $Y_{A_2,\Sigma}^{(K_\infty)}$ is finitely generated over $\Ok$. 
  \end{compactenum} 
  Suppose now that for each $j\in\{1,2\}$ and every $w \in \Sigma(K_\infty)$, the group $A_j(K_{\infty,w})[\pi^\infty]$ is finite. Then also the following statements hold: \begin{compactenum} 
  \item[(b)] Suppose that both $Y_{A_j,\Sigma}^{(K_\infty)}$, $j = 1,2$, are finitely generated over $\Ok$. Let $l \in \N$ be large enough such that $(\pi^l Y_{A_1,\Sigma}^{(K_\infty)})[\pi] = \{0\}$ and $\pi^l A_1(K_{\infty,w})[\pi^\infty] = \{0\}$ for every $w \in \Sigma(K_\infty)$. If $A_1[\pi^{l+1}] \cong A_2[\pi^{l+1}]$ as $G_K$-modules, then $\rg_{\Ok}(Y_{A_2,\Sigma}^{(K_\infty)}) \le \rg_{\Ok}(Y_{A_1,\Sigma}^{(K_\infty)})$. 
  \item[(c)] In the setting of (b), if $A_1[\pi^{l+1}] \cong A_2[\pi^{l+1}]$ for some $l$ such that both $(\pi^l Y_{A_j,\Sigma}^{(K_\infty)})[\pi]$ and all the groups $\pi^l A_j(K_{\infty,w})[\pi^\infty]$, $j \in \{1,2\}$, are trivial, then $$\rg_{\Ok}(Y_{A_2,\Sigma}^{(K_\infty)}) =  \rg_{\Ok}(Y_{A_1,\Sigma}^{(K_\infty)}). $$ 
  \end{compactenum} 
\end{thm}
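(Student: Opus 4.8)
\textbf{Proof proposal for Theorem~\ref{thm:q2}.}
The plan is to follow the same blueprint as in the proofs of Theorems~\ref{thm:A2} and \ref{thm:B}, with the decisive modification that over a non-$p$-extension we cannot control $H^0$ of $A[\pi]$ by a mere degree argument as in Lemma~\ref{lemma:fuer_q}; instead the finiteness of the local modules $A_j(K_{\infty,w})[\pi^\infty]$ takes over that role. First I would establish the analogue of Lemma~\ref{lemma:isomorphisselmergroups} in the following form: for a finite normal subextension $L \subseteq K_\infty$ of $K$ and for $i$ large enough so that $\pi^i$ kills $A(K_{\infty,w})[\pi^\infty]$ for all $w \in \Sigma(K_\infty)$ (equivalently all $w$ of $L$ above $\Sigma$, since these groups only grow), the natural map
\[ \Sel_{0,A[\pi^i],\Sigma}(L) \longrightarrow \Sel_{0,A,\Sigma}(L)[\pi^i] \]
has kernel and cokernel bounded independently of $L$. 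The global side is handled exactly as in Lemma~\ref{lemma:isomorphisselmergroups}: the snake lemma applied to multiplication by $\pi^i$ on $0 \to A[\pi^i] \to A \to A \to 0$ gives $\ker = A(L)/\pi^i A(L)$ and the relevant cokernel vanishes, but now $A(L)[\pi^\infty]$ need no longer be trivial -- it is, however, finite of order bounded by $|A(K_\infty)[\pi^\infty]|$, which is finite because $A(K_\infty)[\pi^\infty] \hookrightarrow A(K_{\infty,w})[\pi^\infty]$ for any $w$. The local side uses the hypothesis that each $v \in \Sigma$ is finitely split, so that $L_w \subseteq K_{\infty,w}$ and $A(L_w)[\pi^\infty]$ is again finite with a uniform bound.

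Next I would pass to the projective limit over the finite normal subextensions $L$ of $K_\infty/K$, obtaining, in place of Corollary~\ref{cor:isomorphisselmergroups}, a four-term exact sequence
\[ 0 \longrightarrow M \longrightarrow Y_{A,\Sigma}^{(K_\infty)}/\pi^i Y_{A,\Sigma}^{(K_\infty)} \longrightarrow \varprojlim_L \Sel_{0,A[\pi^i],\Sigma}(L)^\vee \longrightarrow N \longrightarrow 0 \]
in which $M$ and $N$ are finitely generated $\Ok$-modules that are killed by a fixed power of $\pi$ (hence finite, if one also knows $Y_{A,\Sigma}^{(K_\infty)}$ is $\Ok$-finitely generated; in general they are at least $\Ok$-torsion with bounded exponent). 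Part~(a) is then immediate: $Y_{A,\Sigma}^{(K_\infty)}$ is finitely generated over $\Ok$ if and only if $Y_{A,\Sigma}^{(K_\infty)}/\pi Y_{A,\Sigma}^{(K_\infty)}$ is finite, and for $i = 1$ the congruence $A_1[\pi] \cong A_2[\pi]$ together with Lemma~\ref{lemma:G-iso} identifies $\varprojlim_L \Sel_{0,A_1[\pi],\Sigma}(L)^\vee$ with $\varprojlim_L \Sel_{0,A_2[\pi],\Sigma}(L)^\vee$, so one finiteness forces the other up to the bounded-exponent error terms. For (b) and (c) I would run the counting argument of Theorem~\ref{thm:B}: writing $\rg_q(\cdot)$ for $\dim_{\F_q}(\,\cdot\,/\pi)$, one has $\rg_q(\pi^l Y_{A,\Sigma}^{(K_\infty)}) = \rg_{\Ok}(Y_{A,\Sigma}^{(K_\infty)})$ once $\pi^l$ kills the maximal finite $\Ok$-submodule of $Y_{A,\Sigma}^{(K_\infty)}$ -- which is exactly what the hypothesis $(\pi^l Y_{A_1,\Sigma}^{(K_\infty)})[\pi] = \{0\}$ guarantees, the extra condition $\pi^l A_1(K_{\infty,w})[\pi^\infty] = \{0\}$ being what makes the comparison isomorphism valid in degrees $l$ and $l+1$. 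Comparing $|\pi^l Y_{A_j,\Sigma}^{(K_\infty)}/\pi^{l+1} Y_{A_j,\Sigma}^{(K_\infty)}|$ for $j = 1,2$ via the congruence $A_1[\pi^{l+1}] \cong A_2[\pi^{l+1}]$ and Lemma~\ref{lemma:G-iso} then yields $\rg_{\Ok}(Y_{A_2,\Sigma}^{(K_\infty)}) \le \rg_{\Ok}(Y_{A_1,\Sigma}^{(K_\infty)})$, with equality in (c) by symmetry once both $l$-hypotheses hold.

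The main obstacle I anticipate is making the error terms $M, N$ in the limit sequence genuinely harmless: unlike in the $\Z_p$-case of Theorem~\ref{thm:B}, the group ring $\Ok[[\Gal(K_\infty/K)]]$ is not Noetherian or a nice domain here (as the paper itself remarks), so one must phrase everything purely in terms of $\Ok$-module structure and argue that $M$ and $N$, being inverse limits of finite $\Ok$-modules of bounded exponent and bounded $\Ok$-corank, are themselves finitely generated and $\pi$-power torsion -- this requires checking that the transition maps in the inverse system behave well, using that $\Sigma$ is finitely split so that only finitely many $w \mid v$ appear at each finite level. Verifying that the finiteness of $A_j(K_{\infty,w})[\pi^\infty]$ propagates to a \emph{uniform} bound over all finite $L$ (rather than just to finiteness at each level) is the technical heart, and is precisely where the ``finitely split'' hypothesis is used a second time, beyond its role in part~(a).
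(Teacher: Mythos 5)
Your proposal correctly identifies the strategy for part~(a) — reduce to finiteness of $Y_{A_j,\Sigma}^{(K_\infty)}/\pi Y_{A_j,\Sigma}^{(K_\infty)}$, use the limit of the exact sequence from the proof of Corollary~\ref{cor:vergleich} (which does exactly the comparison you describe, with error terms controlled because every $v \in \Sigma$ is finitely split), and invoke Lemma~\ref{lemma:G-iso}. The paper's proof of (a) is just that, so here you are on the right track, and your concern about the group ring being pathological is moot because the argument is carried out purely at the level of (orders of) finite $\Ok$-modules.

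For parts (b) and (c), however, there is a genuine gap. You write that ``comparing $|\pi^l Y_{A_j,\Sigma}^{(K_\infty)}/\pi^{l+1} Y_{A_j,\Sigma}^{(K_\infty)}|$ for $j=1,2$ via the congruence \ldots then yields $\rg_{\Ok}(Y_{A_2,\Sigma}^{(K_\infty)}) \le \rg_{\Ok}(Y_{A_1,\Sigma}^{(K_\infty)})$'', but you never explain \emph{why the inequality goes in that particular direction}. The congruence and Lemma~\ref{lemma:G-iso} only identify the two limits $\varprojlim_L \Sel_{0,A_j[\pi^{k}],\Sigma}(L)^\vee$; they do not identify the quotients $\pi^l Y_{A_j,\Sigma}^{(K_\infty)}/\pi^{l+1} Y_{A_j,\Sigma}^{(K_\infty)}$, because the error terms in the four-term exact sequence need not agree for $A_1$ and $A_2$. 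The whole technical content of (b) is a careful one-sided estimate on those error terms. The paper does this by splitting the error modules $M_k$ and $N_k$ into exact sequences involving the global torsion $A_j(K_\infty)[\pi^\infty]/\pi^k$, the local torsion $A_j(K_{\infty,w})[\pi^\infty]/\pi^k$, and an intermediate $C_k$; it then shows that the resulting correction factor to the size of the graded piece is always $\le 1$, is exactly $1$ for $A_1$ at $k = l$ by the hypotheses $(\pi^l Y_{A_1,\Sigma}^{(K_\infty)})[\pi] = \{0\}$ and $\pi^l A_1(K_{\infty,w})[\pi^\infty] = \{0\}$, but may be strictly less than $1$ for $A_2$ if $\pi^l$ fails to kill the $A_2(K_{\infty,w})[\pi^\infty]$. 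It is precisely that asymmetry (good control for $A_1$, only a one-sided bound for $A_2$) that produces the \emph{inequality} rather than an equality in (b), and the symmetric hypotheses in (c) that upgrade it to an equality. Your draft, by treating $M$ and $N$ merely as ``bounded error terms to be made harmless,'' misses this mechanism; if you only knew $M$, $N$ were finite of bounded exponent you would be able to say nothing about the direction of the comparison. You need to supply the ratio computation, or some equivalent monotonicity argument, and point out where $\pi^l A_1(K_{\infty,w})[\pi^\infty] = \{0\}$ enters as more than a technical convenience.

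A smaller misdirection: you propose to prove an analogue of the ``split-case'' Lemma~\ref{lemma:isomorphisselmergroups} (an isomorphism up to bounded error). The paper instead reuses the \emph{generic-case} machinery — Lemma~\ref{lemma:vergleich} and the limit sequence \eqref{eq:vergleich} from Corollary~\ref{cor:vergleich} — which already produces the four-term exact sequence with finite error terms once each $v \in \Sigma$ is finitely split. Your proposed lemma is essentially equivalent, but you should be aware that the natural route is via the existing Lemma~\ref{lemma:vergleich}, because the explicit identification of $\ker(h)$ and $\ker(g)$ with the global and local $H^0$-quotients in that proof is exactly what feeds the exact sequences \eqref{eq:M_k} that the ratio argument requires.
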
 

\begin{proof} 
  We first note that $Y_{A_j,\Sigma}^{(K_\infty)}$ is finitely generated over $\Ok$ if and only if the quotient  $Y_{A_j,\Sigma}^{(K_\infty)}/\pi Y_{A_j,\Sigma}^{(K_\infty)}$ is finite. The exact sequence \eqref{eq:vergleich} from the proof of Corollary~\ref{cor:vergleich} implies that the kernels and cokernels of the maps 
  \[ Y_{A_j,\Sigma}^{(K_\infty)}/\pi Y_{A_j,\Sigma}^{(K_\infty)} \longrightarrow \varprojlim_{K \subseteq L \subseteq K_\infty} \textup{Sel}_{0,A_j[\pi],\Sigma}(L)^\vee\] 
  are finite for both $j = 1$ and $j = 2$ (here we use the hypothesis that each $v \in \Sigma$ is finitely split in $K_\infty/K$). The assertion (a) therefore follows from arguments similar to those used in the proof of Theorem~\ref{thm:A}. 
  
  More generally, we have exact sequences 
  \begin{align*} 0 \longrightarrow M_k \longrightarrow Y_{A_j,\Sigma}^{(K_\infty)}/\pi^k Y_{A_j,\Sigma}^{(K_\infty)} \longrightarrow \varprojlim_{K \subseteq L \subseteq K_\infty} \Sel_{0,A_j[\pi^k],\Sigma}(L)^\vee \longrightarrow N_k \longrightarrow 0 \end{align*} 
  for finite abelian groups $M_k$ and $N_k$, $k \in \N$. Moreover, from the proof of Lemma~\ref{lemma:vergleich}, we obtain exact sequences 
  \begin{align}
 \label{eq:M_k} 0 \longrightarrow &N_k \longrightarrow A_j(K_\infty)[\pi^\infty]/\pi^k A_j(K_\infty)[\pi^\infty] \longrightarrow C_k \longrightarrow 0, \\ \nonumber & 0 \longrightarrow C_k \longrightarrow\bigoplus_{w \in \Sigma(K_\infty)} A_j(K_{\infty,w})[\pi^\infty]/\pi^k A_j(K_{\infty,w})[\pi^\infty] \longrightarrow M_k  \longrightarrow 0  \end{align}
  for every $k \in \N$ and $j = 1,2$ (note that $M_k \cong M_k^\vee$ and $N_k \cong N_k^\vee$). Therefore 
  $|\pi^k Y_{A_j, \Sigma}^{(K_\infty)}/\pi^{k+1}Y_{A_j, \Sigma}^{(K_\infty)}|$ differs from  
   \[|\varprojlim_{K \subseteq L \subseteq K_\infty} \Sel_{0,A_j[\pi^{k+1}],\Sigma}(L)^\vee / \varprojlim_{K \subseteq L \subseteq K_\infty} \Sel_{0,A_j[\pi^k],\Sigma}(L)^\vee|\] 
   by a factor $\frac{|M_k| |N_{k+1}|}{|N_k||M_{k+1}|}$ which is smaller than or equal to 1 because \eqref{eq:M_k} implies that $\frac{|N_{k+1}|}{|N_k|} \le \frac{|M_{k+1}|}{|M_k|}$. In fact, for $k = l$ and $j = 1$, this factor is 1 by our hypotheses. Therefore 
   \[ |\pi^l Y_{A_1,\Sigma}^{(K_\infty)}/\pi^{l+1}Y_{A_1,\Sigma}^{(K_\infty)}| = |\varprojlim_{K \subseteq L \subseteq K_\infty} \Sel_{0,A_2[\pi^{l+1}],\Sigma}(L)^\vee / \varprojlim_{K \subseteq L \subseteq K_\infty} \Sel_{0,A_2[\pi^l],\Sigma}(L)^\vee|\]
   because $A_1[\pi^{l+1}] \cong A_2[\pi^{l+1}]$. Note that the factor $\frac{|M_k| |N_{k+1}|}{|N_k||M_{k+1}|}$ can be strictly smaller than 1 for $A_2$. This happens if $\pi^l$ does not annihilate the $\pi$-primary subgroups of the $A_2(K_{\infty,w}$). We have thus shown that $$\rg_q(\pi^l Y_{A_2,\Sigma}^{(K_\infty)}) \le \rg_q(\pi^l Y_{A_1,\Sigma}^{(K_\infty)}),$$ 
   where $\rg_q$ is defined as in the proof of Theorem~\ref{thm:B}. 
   The assertion (b) follows since 
   \[ |\pi^l Y_{A_1,\Sigma}^{(K_\infty)}/\pi^{l+1}Y_{A_1,\Sigma}^{(K_\infty)}| = q^{\rg_{\Ok}(Y_{A_1,\Sigma}^{(K_\infty)})} \] 
   because $(\pi^l Y_{A_1,\Sigma}^{(K_\infty)})[\pi] = \{0\}$ by assumption; the $\Ok$-rank of $Y_{A_2,\Sigma}^{(K_\infty)}$ can be strictly smaller than the corresponding $q$-rank, as in the proof of Theorem~\ref{thm:B}. 
   
   Finally, (c) follows by reverting the roles of $A_1$ and $A_2$ in the previous proof. 
  \end{proof} 
\begin{rem}
  Going through the proof of the theorem, one sees that actually the finiteness of $A_2(K_{\infty,w})[\pi^\infty]$ is needed only for at least one $w \in \Sigma(K_\infty)$. Moreover, if one assumes that $A(K_{\infty})[\pi]=\{0\}$, then one can drop completely the condition that the group $A_2(K_{\infty,w})[\pi^\infty]$ is finite for every $w \in \Sigma(K_\infty)$ in point (b) of the above theorem. 
\end{rem}

\begin{rem} 
  In order to give some evidence for the finiteness assumptions in the last two parts of Theorem~\ref{thm:q2}, we mention some known results in the special setting of abelian varieties. In the following, we let $A$ be an abelian variety defined over the number field $K$, and we consider $\Ok = \Z_p$. 
  
  Actually the following conditions are sufficient for ensuring finite torsion groups, i.e. not only finite $p$-torsion for some fixed prime $p$. \begin{compactenum}[(i)] 
    \item If $K_\infty/K$ is a finite extension, then the torsion subgroup of $A(K_{\infty,w})$ is finite for each prime $w$ by the theorem of Mattuck (see \cite{mattuck}). 
    \item If $A$ has potentially good and ordinary reduction at some prime $q$, then the torsion subgroup of $A(K_{\infty,w})$ is finite for each $w \mid q$ if $K_\infty$ is a finite extension of the cyclotomic $\Z_q$-extension of $K$ (see \cite{imai}). 
    \item For global fields, more is known: let $\Omega$ be the field obtained from $K$ by adjoining \emph{all} roots of unity in some fixed algebraic closure of $K$ (i.e. $\Omega$ contains the cyclotomic $\Z_q$-extensions for all primes $q$). Then it follows from results of Ribet (see \cite[Appendix, Theorem~1]{ribet}) that the torsion group of $\Omega(A)$ is finite. 
  \end{compactenum} 
\end{rem} 

We conclude by mentioning a special setting, namely of an elliptic curve $A = E$ defined over $K$, in which $Y_{A, \Sigma}^{(K_\infty)}$ is known to be finitely generated over $\Ok = \Z_p$ for an infinite non-$p$-extension $K_\infty$ of $K$. 
\begin{example} 
  Let $N$ be an imaginary quadratic number field, and let $E$ be an elliptic curve with complex multiplication by the ring of integers $\mathcal{O}_N$ of $N$. Let $q > 3$ be a prime of good reduction which splits in $N$, $q \mathcal{O}_N = \q \overline{\q}$. Let $K$ be an abelian extension of $N$ which is tamely ramified at $\q$, and let $K_\infty = K \cdot  N(E[\q^\infty])$. 
  
  Now suppose that $p \ne q$ is a prime number which is co-prime with $6 [K:N]$. We assume that $p$ splits in $N/\Q$, does not ramify in $K/N$ and that $E$ has good reduction at the primes of $N$ above $p$. Let $\Sigma$ be a finite set of primes of $K$ which contains $S_{\textup{ram}}(K_\infty/K)$, $S_p$ and $S_{\textup{ram}}(E)$. If $E(K)[p] = \{0\}$ and $E(K_v)[p] = \{0\}$ for every $v \in \Sigma$, then 
  $\rg_{\Z_p}(Y_{E',\Sigma}^{(K_\infty)})$ is finite for every elliptic curve $E'$ which is defined over $K$ and satisfies $S_{\textup{ram}}(E') \subseteq \Sigma$ and $E'[p] \cong E[p]$ as $G_K$-modules. 

   Indeed, by \cite[Theorem~1.2]{lamplugh}, the hypotheses of the corollary imply that in fact $X_E^{(K_\infty)}$ is finitely generated over $\Z_p$. Now we can apply Theorem~\ref{thm:q}. 
\end{example}

\bibliography{references} 

\newcommand{\etalchar}[1]{$^{#1}$}
\begin{thebibliography}{CFK{\etalchar{+}}05}

\bibitem[AS15]{ahmed-shekhar}
S.~Ahmed and S.~Shekhar.
\newblock {$\lambda$}-invariants of {S}elmer groups of elliptic curves with
  positive {$\mu$}-invariant.
\newblock {\em J. Ramanujan Math. Soc.}, 30(1):115--133, 2015.

\bibitem[Bar13]{barth}
P.~Barth.
\newblock Iwasawa theory for one-parameter families of motives.
\newblock {\em Int. J. Number Theory}, 9(2):257--319, 2013.

\bibitem[BS10]{barman-saikia}
R.~{Barman} and A.~{Saikia}.
\newblock {A note on Iwasawa \(\mu\)-invariants of elliptic curves}.
\newblock {\em {Bull. Braz. Math. Soc. (N.S.)}}, 41(3):399--407, 2010.

\bibitem[CFK{\etalchar{+}}05]{5_people}
J.~Coates, T.~Fukaya, K.~Kato, R.~Sujatha, and O.~Venjakob.
\newblock The {$\rm GL_2$} main conjecture for elliptic curves without complex
  multiplication.
\newblock {\em Publ. Math. Inst. Hautes \'{E}tudes Sci.}, (101):163--208, 2005.

\bibitem[CH01]{coates-howson}
J.~H. Coates and S.~Howson.
\newblock Euler characteristics and elliptic curves. {II}.
\newblock {\em J. Math. Soc. Japan}, 53(1):175--235, 2001.

\bibitem[EPW06]{emerton-pollack-weston}
M.~Emerton, R.~Pollack, and T.~Weston.
\newblock Variation of {I}wasawa invariants in {H}ida families.
\newblock {\em Invent. Math.}, 163(3):523--580, 2006.

\bibitem[Gre89]{greenberg89}
R.~Greenberg.
\newblock Iwasawa theory for {$p$}-adic representations.
\newblock In {\em Algebraic number theory}, volume~17 of {\em Adv. Stud. Pure
  Math.}, pages 97--137. Academic Press, Boston, MA, 1989.

\bibitem[GV00]{greenberg-vatsal}
R.~Greenberg and V.~Vatsal.
\newblock On the {I}wasawa invariants of elliptic curves.
\newblock {\em Invent. Math.}, 142(1):17--63, 2000.

\bibitem[GW04]{goodearl-warfield}
K.~R. Goodearl and R.~B. Warfield, Jr.
\newblock {\em An introduction to noncommutative {N}oetherian rings}, volume~61
  of {\em London Mathematical Society Student Texts}.
\newblock Cambridge University Press, Cambridge, second edition, 2004.

\bibitem[Hac11]{hachimori}
Y.~Hachimori.
\newblock Iwasawa {$\lambda$}-invariants and congruence of {G}alois
  representations.
\newblock {\em J. Ramanujan Math. Soc.}, 26(2):203--217, 2011.

\bibitem[Hat17]{hatley}
J.~Hatley.
\newblock Rank parity for congruent supersingular elliptic curves.
\newblock {\em Proc. Amer. Math. Soc.}, 145(9):3775--3786, 2017.

\bibitem[HL19]{hatley-lei}
J.~Hatley and A.~Lei.
\newblock Comparing anticyclotomic {S}elmer groups of positive coranks for
  congruent modular forms.
\newblock {\em Math. Res. Lett.}, 26(4):1115--1144, 2019.

\bibitem[How02]{howson}
S.~Howson.
\newblock {Euler characteristics as invariants of {I}wasawa modules}.
\newblock {\em Proc. London Math. Soc. (3)}, 85(3):634--658, 2002.

\bibitem[{Ima}75]{imai}
H.~{Imai}.
\newblock {A remark on the rational points of Abelian varieties with values in
  cyclotomic {$\mathbb{Z}_p$}-extensions.}
\newblock {\em {Proc. Japan Acad.}}, 51:12--16, 1975.

\bibitem[Jha12]{jha}
S.~Jha.
\newblock Fine {S}elmer group of {H}ida deformations over non-commutative
  {$p$}-adic {L}ie extensions.
\newblock {\em Asian J. Math.}, 16(2):353--365, 2012.

\bibitem[KL81]{ribet}
N.~M. {Katz} and S.~{Lang}.
\newblock {Finiteness theorems in geometric classfield theory. (With an
  appendix by K. A. Ribet).}
\newblock {\em {Enseign. Math. (2)}}, 27:285--314, 315--319, 1981.

\bibitem[{Kle}17]{local_beh}
S.~{Kleine}.
\newblock {Local behavior of Iwasawa's invariants.}
\newblock {\em {Int. J. Number Theory}}, 13(4):1013--1036, 2017.

\bibitem[Lam15]{lamplugh}
J.~Lamplugh.
\newblock An analogue of the {W}ashington-{S}innott theorem for elliptic curves
  with complex multiplication {I}.
\newblock {\em J. Lond. Math. Soc. (2)}, 91(3):609--642, 2015.

\bibitem[{Lan}97]{lang_diophantine}
S.~{Lang}.
\newblock {\em {Survey of diophantine geometry. Transl. from the Russian. Corr.
  2nd printing}}.
\newblock Berlin: Springer, corr. 2nd printing edition, 1997.

\bibitem[Lim17a]{lim}
M.~F. Lim.
\newblock Comparing the {$\pi$}-primary submodules of the dual {S}elmer groups.
\newblock {\em Asian J. Math.}, 21(6):1153--1181, 2017.

\bibitem[Lim17b]{lim_fuer_howson}
M.~F. Lim.
\newblock Notes on the fine {S}elmer groups.
\newblock {\em Asian J. Math.}, 21(2):337--361, 2017.

\bibitem[LKM16]{lim-murty}
M.~F. Lim and V.~Kumar~Murty.
\newblock The growth of fine {S}elmer groups.
\newblock {\em J. Ramanujan Math. Soc.}, 31(1):79--94, 2016.

\bibitem[LS18]{lim_congruent}
M.~F. Lim and R.~Sujatha.
\newblock Fine {S}elmer groups of congruent {G}alois representations.
\newblock {\em J. Number Theory}, 187:66--91, 2018.

\bibitem[Mat55]{mattuck}
A.~Mattuck.
\newblock Abelian varieties over {$p$}-adic ground fields.
\newblock {\em Ann. of Math. (2)}, 62:92--119, 1955.

\bibitem[MR04]{kolyvagin}
B.~Mazur and K.~Rubin.
\newblock Kolyvagin systems.
\newblock {\em Mem. Amer. Math. Soc.}, 168(799):viii+96, 2004.

\bibitem[NSW08]{nsw}
J.~{Neukirch}, A.~{Schmidt}, and K.~{Wingberg}.
\newblock {\em {Cohomology of number fields. 2nd ed.}}
\newblock Berlin: Springer, 2nd edition, 2008.

\bibitem[Pon20]{ponsinet}
G.~Ponsinet.
\newblock On the structure of signed {S}elmer groups.
\newblock {\em Math. Z.}, 294(3-4):1635--1658, 2020.

\bibitem[Sha09]{sharma}
A.~C. Sharma.
\newblock Iwasawa invariants for the false-{T}ate extension and congruences
  between modular forms.
\newblock {\em J. Number Theory}, 129(8):1893--1911, 2009.

\bibitem[SW18]{ramdorai-witte}
R.~Sujatha and M.~Witte.
\newblock Fine {S}elmer groups and isogeny invariance.
\newblock In {\em Geometry, algebra, number theory, and their information
  technology applications}, volume 251 of {\em Springer Proc. Math. Stat.},
  pages 419--444. Springer, Cham, 2018.

\bibitem[Ven02]{venjakob}
O.~Venjakob.
\newblock {On the Structure Theory of the Iwasawa Algebra of a $p$-adic Lie
  Group}.
\newblock {\em J. Eur. Math. Soc.}, (4):271--311, 2002.

\end{thebibliography}
\bibliographystyle{alpha}

\end{document}